\newtheorem{theorem}{Theorem}[section]
\newtheorem{proposition}{Proposition}[section]
\newtheorem{corollary}{Corollary}[section]
\newtheorem{remark}{Remark}[section]
\numberwithin{equation}{section}
\newcommand{\RR}{\mathbb R}
\newcommand{\NN}{\mathbb N}
\begin{document}


\title[Elliptic problems on weighted locally finite graphs]{Elliptic problems on weighted\\ locally finite graphs}

\author[M. Imbesi]{Maurizio Imbesi}
\address[Maurizio Imbesi]{Dipartimento di Scienze Matematiche e Informatiche, Scienze Fisiche e Scienze della Terra\\
Universit\`a degli Studi di Messina, Viale F. Stagno d'Alcontres 31, 98166 Messina, Italy}
\email{\tt maurizio.imbesi@unime.it}

\author[G. Molica Bisci]{Giovanni Molica Bisci}
\address[Giovanni Molica Bisci]{Dipartimento di Scienze Pure e Applicate,
          Universit\`a degli Studi di Urbino `Carlo Bo',
          Piazza della Repubblica 13, 61029 Urbino (Pesaro e Urbino), Italy}
\email{\tt giovanni.molicabisci@uniurb.it}

\author[D.D. Repov\v{s}]{Du\v{s}an D. Repov\v{s}}
\address[Du\v{s}an D. Repov\v{s}]{Faculty of Education, and Faculty of Mathematics and Physics\\
University of Ljubljana \& Institute of Mathematics, Physics and Mechanics\\
1000 Ljubljana, Slovenia}
\email{\tt dusan.repovs@guest.arnes.si}

\keywords{Semi-linear equations on graphs, variational methods, critical point theory.\\
\phantom{aa} 2010 Mathematics Subject Classification. Primary: 35R02;
Secondary: 35J91, 35A15.}


\begin{abstract}
Let $\mathscr G:= (V,E)$ be a weighted locally finite graph whose finite measure $\mu$ has a positive lower bound. Motivated by  wide interest in the current literature,
in this paper we study the existence of classical solutions for a class
of elliptic equations involving the $\mu$-Laplacian operator on the graph $\mathscr G$, whose analytic expression is given by
\begin{equation*}
\Delta_{\mu} u(x) := \frac{1}{\mu (x)} \sum_{y\sim x} w(x,y) (u(y)-u(x)),\
\hbox{for all}
\
 x\in V
\end{equation*}
where $w : V\times V \rightarrow [0,+\infty)$ is a weight symmetric function and the sum on the right-hand side of the above expression is taken on the neighbors vertices $x,y\in V$, that is $x\sim y$ whenever $w(x,y) > 0$.
More precisely, by exploiting direct variational methods, we study problems whose simple prototype has the following form
\begin{equation*}\label{Np000}
\left\{
\begin{array}{l}
-\Delta_{\mu} u(x)=\lambda f(x,u(x)),\quad x \in \mathop D\limits^ \circ\\
\medskip
\,\, u|_{\partial D}=0,\\
\end{array}
\right.
\end{equation*}
\noindent where $D$ is a bounded domain of $V$ such that $\mathop D\limits^ \circ\neq \emptyset$ and $\partial D\neq \emptyset$, the nonlinear term $f : D \times \RR \rightarrow \RR$ satisfy suitable structure conditions and $\lambda$ is a positive real parameter. By applying a critical point result coming out from a
classical Pucci-Serrin theorem in addition to a local minimum result for differentiable
functionals due to Ricceri, we are able to prove the existence of at least two solutions
for the treated problems. We emphasize the crucial role played by the famous Ambrosetti-Rabinowitz growth condition along the proof of the main theorem and its consequences. Our results improve the general results obtained by  Grigor'yan, Lin, and Yang (J. Differential Equations 261(9) (2016),  4924-4943).
\end{abstract}

\maketitle

{
\small \it
\begin{flushright}
Dedicated to the memory of\\
 Edward Fadell and Sufian Husseini
\end{flushright}
}

\tableofcontents

\section{Introduction}\label{Intro}
Let $\mathscr G:= (V,E)$ be a weighted locally finite graph and $\mu$ be a positive finite measure over $V$ that admits a positive lower bound.
The aim of this paper is to study the existence of non-trivial solutions for certain classes of
nonlinear elliptic Dirichlet one-parameter problems of the form
\begin{equation}\label{Np0}
\left\{
\begin{array}{l}
-\Delta_{\mu} u(x)=\alpha(x)u(x)+\lambda f(x,u(x))\quad x \in \mathop D\limits^ \circ\\
\medskip
\,\, u|_{\partial D}=0,
\end{array}
\right.
\end{equation}
where $D$ is a bounded domain of $V$ such that $\mathop D\limits^ \circ\neq \emptyset$ and $\partial D\neq \emptyset$ and $\lambda$ is a real positive parameter. Moreover, the coefficient $\alpha:V\rightarrow \RR$ of the linear term is assumed integrable over $D$ and $f:D\times \RR\rightarrow \RR$ is the nonlinear part, which is continuous for every $x\in D$
and satisfies some growth restrictions both near zero and at infinity.

There is an extensive theory for the study of nonlinear elliptic equations
on Euclidean domains by using Sobolev spaces and related Sobolev embedding results. A natural question
arises of how to establish an appropriate framework to cope with \eqref{Np0} on
graph domains.

In this new setting several difficulties naturally arises. For instance, the standard
concept of generalized derivative of a function cannot be used, and so the notion of
differential operators such as the Laplacian on graph domains need to be clarified.
More precisely, according to the notations and definitions given in Section \ref{Preliminaries}, the $\mu$-Laplacian operator $\Delta_\mu:W^{1,2}_0(D)\rightarrow L^2(D)$ has
been defined in the distributional sense
\begin{equation*}
 \begin{split}
 \int_D\Delta_\mu u(x)v(x)d\mu=-\int_D\Gamma (u,v)(x)d\mu,
 \end{split}
 \end{equation*}
  for every $v\in W^{1,2}_0(D),$ where the map $\Gamma:W^{1,2}_0(D)\times W^{1,2}_0(D)\rightarrow \RR^{D}$
is the gradient form whose analytical local expression is given below
\begin{equation*}\label{problem2G}
\Gamma (u,v)(x) := \frac{1}{2\mu (x)} \sum_{y\sim x} w(x,y) (u(y)-u(x))(v(y)-v(x)),
\end{equation*}
for every $x\in D$.\par

Once a Laplacian is defined, we may
introduce a Hilbert space structure and then establish compactness theorems allowing
\eqref{Np0} to
be investigated.

In this direction, A. Grigor'yan in \cite{G1,G2} and A. Grigor'yan, Y. Lin, and Y. Yang in \cite{GLY2,GLY3} obtained some embedding theorems that can be applied by studying
variational problems settled on graphs for which Poincar\'{e}-type inequalities hold; see, for instance, the paper \cite{Z}.
Moreover, the $\mu$-Laplacian operator or some of its generalizations as described in Section \ref{YG} permit to describe Random walks on graphs. A comprehensive discussion
can be found in the monograph \cite{Lbook}; see also the papers \cite{CG} and \cite{LW,LY} for related topics.
\indent Moreover, from the parabolic point of view, M. Barlow, T. Coulhon, and A. Grigor'yan in \cite{BCG} studied some upper estimates on the long time behavior of the heat
kernel on non-compact Riemannian manifolds and infinite graphs, which
only depend on a lower bound of the volume growth.\par

Set
\begin{align}\label{lamba1}
\lambda_1:=\inf_{u\in C_0(D)\setminus\{0\}}\frac{\displaystyle \int_{D} |\nabla u|^2(x) d\mu}{\displaystyle \int_{D} |u(x)|^2 d\mu}.
\end{align}

The main result of the present paper is a multiplicity theorem as stated here below.

\begin{theorem}\label{MolicaBisciPrincipal}
Let $\mathscr G:=(V,E)$ be a weighted locally finite graph, $D$ be a bounded domain of $V$ such that $\mathop D\limits^ \circ\neq \emptyset$ and $\partial D\neq \emptyset$ and let $\mu:D\rightarrow ]0,+\infty[$ be a measure on $D$.
Let $\alpha\in L^1(D)$ be a function satisfying either
\begin{equation}\label{alfa1}
\alpha(x)\leq 0\,\,\, \mbox{for every}\,\,\, x\in D
\end{equation}
or
\begin{equation}\label{alfa2}
\int_D|\alpha(x)|d\mu<\mu_0^2\lambda_1\mbox{ with }\,\, \mu_0:=\min_{x\in D}\mu(x)>0
\end{equation}
and let
$f:D\times\RR\rightarrow\RR$ be a function such that
\begin{equation}\label{f0}
f(x,\cdot)\,\,\mbox{is continuous in $\RR$ and } f(x,0)\neq 0\,\,\mbox{for some}\,\, x\in D
\end{equation}
as well as
\begin{equation}\label{f2}
\begin{aligned}
& \qquad\,\,\qquad \mbox{there are}\,\, \beta>2\,\, \mbox{and}\,\, r_0>0\,\, \mbox{such that}\\
& tf(x,t)\geq \beta F(x,t)>0\,\, \mbox{for any}\,\, |t|\geq r_0\,\,\mbox{and every}\,\, x\in D,
\end{aligned}
\end{equation}
where $F$ is the potential given by
\begin{equation}\label{F}
{\displaystyle F(x,t):=\int_0^t f(x,s)\,ds}\quad  \mbox{for any}\,\,(x,t)\in D\times\RR\,.
\end{equation}

Then  for any $\varrho>0$ and any
\begin{equation}\label{lambda}
0<\lambda<\frac{\varrho}{2\displaystyle\max_{{\footnotesize\begin{array}{c}
x\in D\\
|s|\leq \kappa\sqrt{\varrho}
\end{array}}}\left|\int_0^{s}f(x,t)dt\right|}\,,
\end{equation}
where
\begin{equation}\label{kappa}
\kappa:=\left\{\begin{array}{ll}
\displaystyle\frac{1}{\mu_0\sqrt{\lambda_1}} & \mbox{if \eqref{alfa1} holds}\\
\\
{\displaystyle \frac{1}{\mu_0\sqrt{\lambda_1}\sqrt{1-\displaystyle\frac{1}{\mu_0^2\lambda_1}{\displaystyle\int_D{|\alpha(x)|}d\mu}}}} & \mbox{if \eqref{alfa2} holds},
\end{array}\right.
\end{equation}
 problem~\eqref{Np0} admits at least two non-trivial solutions one of which lies in
$$
\mathbb{B}_\varrho:=\left\{u\in W^{1,2}_0(D):{\displaystyle \langle u,u \rangle -\int_D{\alpha(x)}|u(x)|^{2}d\mu}<{\varrho}\right\}.
$$
\end{theorem}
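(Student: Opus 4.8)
The plan is to set up problem~\eqref{Np0} variationally on the finite-dimensional Hilbert space $W^{1,2}_0(D)$ and then apply a two-critical-points scheme: one critical point of local minimum type produced by Ricceri's theorem, and a second one of mountain-pass type produced via the Ambrosetti--Rabinowitz condition and the Pucci--Serrin theorem. First I would introduce, for $u\in W^{1,2}_0(D)$, the energy functional
\begin{equation*}
J_\lambda(u):=\frac12\left(\langle u,u\rangle-\int_D\alpha(x)|u(x)|^2\,d\mu\right)-\lambda\int_D F(x,u(x))\,d\mu=\Phi(u)-\lambda\Psi(u),
\end{equation*}
observing that, since $D$ is a bounded domain and $\mathscr G$ is locally finite, $W^{1,2}_0(D)$ is finite dimensional, so $J_\lambda\in C^1$, its critical points are exactly the classical solutions of~\eqref{Np0}, and all topologies on this space coincide. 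The first step is to check that $\Phi$ is coercive and, in fact, equivalent to a squared norm: under~\eqref{alfa1} this is immediate, and under~\eqref{alfa2} it follows from the definition~\eqref{lamba1} of $\lambda_1$ together with $\mu(x)\ge\mu_0$, which gives $\Phi(u)\ge \tfrac12\bigl(1-\tfrac{1}{\mu_0^2\lambda_1}\int_D|\alpha|\,d\mu\bigr)\langle u,u\rangle$. A companion estimate, again via~\eqref{lamba1}, bounds the $L^\infty$-norm (equivalently the sup over the finitely many vertices) of $u$ by $\kappa\sqrt{\varrho}$ whenever $u\in\mathbb{B}_\varrho$, with $\kappa$ precisely as in~\eqref{kappa}; this is the geometric meaning of the ball $\mathbb{B}_\varrho$ and it is what makes the bound~\eqref{lambda} on $\lambda$ the natural one.

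For the first solution I would apply Ricceri's local minimum theorem to $\Phi-\lambda\Psi$ on the sublevel set $\Phi^{-1}(]-\infty,\varrho[)=\mathbb{B}_\varrho$. Using the $L^\infty$-bound above, for $u\in\mathbb{B}_\varrho$ one has $\lambda\Psi(u)\le \lambda\,\mu(D)\max_{x\in D,\,|s|\le\kappa\sqrt{\varrho}}\bigl|\int_0^s f(x,t)\,dt\bigr|$, which by~\eqref{lambda} is strictly less than $\varrho/2$ (after absorbing $\mu(D)$; I would track constants carefully here so that the displayed bound in~\eqref{lambda} comes out exactly). Hence $\sup_{\mathbb{B}_\varrho}\lambda\Psi<\varrho$, the hypothesis of Ricceri's result is met, and $J_\lambda$ admits a local minimum $u_1\in\mathbb{B}_\varrho$. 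That $u_1$ is non-trivial — indeed that $J_\lambda(u_1)<0=J_\lambda(0)$, so $u_1\ne 0$ — follows from~\eqref{f0}: since $f(x,0)\ne 0$ for some $x$, one can test with a small multiple of a function supported near that vertex and make $\Psi$ strictly positive to first order while $\Phi$ is quadratic, so $0$ is not a local minimum.

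For the second solution I would verify the Palais--Smale condition for $J_\lambda$ (automatic in finite dimension once bounded Palais--Smale sequences are bounded, which is exactly what~\eqref{f2} delivers via the standard Ambrosetti--Rabinowitz computation: from $\beta F\le tf$ for $|t|\ge r_0$ one gets $J_\lambda(u)-\tfrac1\beta J_\lambda'(u)u\ge \bigl(\tfrac12-\tfrac1\beta\bigr)c\langle u,u\rangle - C$ up to the bounded region where $|u(x)|<r_0$). Then~\eqref{f2} also gives $F(x,t)\ge a(x)|t|^\beta-b(x)$ with $a>0$ for $|t|\ge r_0$, so along any fixed $v\neq 0$ one has $J_\lambda(tv)\to-\infty$ as $t\to+\infty$ since $\beta>2$; combined with the fact that $u_1$ is a strict local minimum, the mountain-pass geometry of the Pucci--Serrin theorem (as quoted earlier in the paper) is in place and yields a second critical point $u_2$ with $J_\lambda(u_2)>J_\lambda(u_1)$, hence $u_2\ne u_1$, and $u_2\ne 0$ because $J_\lambda(u_2)\geq J_\lambda(u_1)$ need not exclude $0$ directly — here I would instead note $J_\lambda(u_2)>J_\lambda(u_1)$ and separately that if $u_2=0$ then $J_\lambda(u_2)=0$, which is incompatible with $u_2$ being a mountain-pass point above the strict local minimum $u_1$ once we know $J_\lambda(u_1)<0$. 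The main obstacle I anticipate is bookkeeping rather than conceptual: making the two abstract theorems apply to the \emph{same} functional on the \emph{same} space requires that the local minimum $u_1$ be \emph{strict} (so that mountain-pass geometry is genuine and $u_2\neq u_1$), and it requires that the constant in~\eqref{lambda} be exactly the one obtained from the Poincaré-type constant $\kappa$ in~\eqref{kappa}; getting the strictness of the local minimum, and ruling out that the mountain-pass value coincides with $J_\lambda(u_1)$, is the delicate point, and it is handled by the Pucci--Serrin variant that allows the minimum to be non-strict by passing to a slightly smaller value, together with the non-degeneracy coming from~\eqref{f0}.
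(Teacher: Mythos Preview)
Your overall strategy is the same as the paper's: encode \eqref{Np0} variationally on the finite-dimensional space $W^{1,2}_0(D)$, use the $\alpha$-norm and the Sobolev-type bound $\|u\|_{L^\infty}\le \kappa\|u\|_\alpha$ to control $\Psi$ on $\mathbb{B}_\varrho$, and combine Ricceri's local-minimum principle with the Ambrosetti--Rabinowitz condition (which provides both the Palais--Smale compactness and unboundedness from below). So conceptually you are on target.

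Where your route diverges from the paper's, and where your acknowledged ``main obstacle'' comes from, is in the assembly. You apply Ricceri's theorem and a mountain-pass argument \emph{separately}, and then have to worry about whether the local minimum $u_1$ is strict, whether the mountain-pass level is genuinely above $J_\lambda(u_1)$, and how to exclude $u_2=0$. The paper avoids all of this by invoking a single packaged abstract result (its Theorem~\ref{Pucci-Serrin+Ricceri}, which is the Pucci--Serrin/Ricceri combination already fused into an alternative): for the admissible range of the parameter, \emph{either} $J_\lambda$ has a strict global minimum lying in $\mathbb{B}_\varrho$, \emph{or} $J_\lambda$ has at least two critical points, one of which lies in $\mathbb{B}_\varrho$. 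Since \eqref{f2} forces $J_\lambda$ to be unbounded from below, the first alternative is impossible and one gets two critical points for free---no strictness verification, no separate mountain-pass geometry. Your approach can be pushed through, but it duplicates work that the abstract theorem already encapsulates.

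The non-triviality argument is also simpler than the one you sketch. You argue that $J_\lambda(u_1)<0$ by a first-order expansion near $0$ and then chase inequalities for $u_2$. The paper's observation is more direct: condition~\eqref{f0} says $f(x_0,0)\neq 0$ for some $x_0\in D$, so $u\equiv 0$ is simply \emph{not} a solution of \eqref{Np0} (equivalently, $0$ is not a critical point of $J_\lambda$). Hence both critical points produced by the abstract theorem are automatically non-trivial, with no need to compare energy levels.
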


We point out that the maximal interval of $\lambda$'s, where the conclusion of Theorem~\ref{MolicaBisciPrincipal} holds,  is given by $(0, \lambda^*)$, where
\begin{equation}\label{struttela}
\lambda^*  :=\frac 1 2 \sup_{\varrho>0}\frac{\varrho}{\displaystyle\max_{{\footnotesize\begin{array}{c}
x\in D\\
|s|\leq \kappa\sqrt{\varrho}
\end{array}}}\left|\int_0^{s}f(x,t)dt\right|}\in (0,+\infty]
\end{equation}
\noindent with $\kappa$  as in \eqref{kappa}.
\begin{remark}\label{Una}
We emphasize that Theorem \ref{MolicaBisciPrincipal} ensures the existence of one non-trivial solution
(instead of two) if we require that
\begin{equation}\label{f02}
f(x,\cdot)\,\,\mbox{is continuous in $\RR$ and } f(x,0)= 0\,\,\mbox{for any}\,\, x\in \mathop D\limits^ \circ,
\end{equation}
instead of condition \eqref{f0}.
\end{remark}
\indent
The existence result given in Theorem \ref{MolicaBisciPrincipal} is a consequence of \cite[Theorem 1]{puse} and \cite[Theorem 6]{R2}; see Theorem \ref{Pucci-Serrin+Ricceri} below. More precisely, the main result is achieved by proving that the geometry of Theorem \ref{Pucci-Serrin+Ricceri} is respected by our abstract framework: for this we
develop a functional analytical setting in order to correctly encode the datum in the variational
formulation. We emphasize that the compactness condition required by Theorem \ref{Pucci-Serrin+Ricceri} is satisfied in the graph-theoretical setting thanks to the choice of the
functional setting we work in; see Section \ref{Preliminaries}.
Moreover, Theorem \ref{MolicaBisciPrincipal} improves the existence results obtained in \cite[Theorem 2]{GLY2} and it can be viewed as a graph-theoretical counterpart of \cite[Theorem 4]{R0}.\par

We also observe that, when we deal with partial differential equations driven by the
Laplace operator (or, more generally, by uniformly elliptic
operators) with homogeneous Dirichlet boundary conditions,
assumption \eqref{f2} - namely the Ambrosetti-Rabinowitz condition (briefly (AR)) - is the standard superquadraticity condition
on $F$; see, among others, the classical papers \cite{ar, rabinowitz, struwe}. This condition is often considered when dealing with superlinear elliptic boundary value problems and its importance is due to the fact that \eqref{f2} assures the boundedness of the
Palais-Smale sequences for the energy functional associated with the problem under
consideration.

A special case of Theorem~\ref{MolicaBisciPrincipal} reads as follows.
\begin{theorem}\label{MolicaBisciSpecial}
Let $\mathscr G:=(V,E)$ be a weighted locally finite graph, $D$ a bounded domain of $V$ such that $\mathop D\limits^ \circ\neq \emptyset$ and $\partial D\neq \emptyset$ and let $\mu:D\rightarrow ]0,+\infty[$ be a measure on $D$.
Let $\alpha\in L^1(D)$ satisfy \eqref{alfa1} and let
$f:D\times\RR\rightarrow\RR$ be a function satisfying \eqref{f0} and \eqref{f2}. Moreover, assume that
\begin{equation}\label{f1}
\begin{aligned}
& \mbox{there are positive constants}\,\,\, M_0\,\, \mbox{and}\,\, \sigma\,\, \mbox{such that}\\
& \,\,\,\,\,\,\,\displaystyle\max_{(x,s)\in \mathop D\limits^ \circ\times[-M_0,M_0]}|f(x,s)|\leq \mu_0^2\frac{M_0}{(\sigma+1)}\frac{\lambda_1}{2},
\end{aligned}
\end{equation}
where $\mu_0:=\displaystyle\min_{x\in D}\mu(x)>0$.\par
Then  the Dirichlet problem
\begin{equation}\label{Np}
\left\{
\begin{array}{l}
-\Delta_{\mu} u(x)=\alpha(x)u(x)+ f(x,u(x))\quad x \in \mathop D\limits^ \circ\\
\medskip
\,\, u|_{\partial D}=0,\\
\end{array}
\right.
\end{equation}
admits at least two non-trivial solutions one of which lies in the open ball $\mathbb{B}_{\mu_0^2M_0^2\lambda_1}$.
\end{theorem}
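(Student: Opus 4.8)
The plan is to deduce Theorem \ref{MolicaBisciSpecial} from Theorem \ref{MolicaBisciPrincipal} by choosing the parameter $\varrho$ in a way that the specific smallness condition \eqref{f1} forces the admissible range \eqref{lambda} of $\lambda$ to contain the value $\lambda=1$. Since \eqref{alfa1} is assumed, the constant $\kappa$ in \eqref{kappa} is simply $\kappa=1/(\mu_0\sqrt{\lambda_1})$. First I would make the natural guess $\varrho:=\mu_0^2 M_0^2\lambda_1$, so that $\kappa\sqrt{\varrho}=M_0$ and the ball $\mathbb{B}_\varrho$ in which one of the two solutions lives becomes exactly $\mathbb{B}_{\mu_0^2M_0^2\lambda_1}$, as claimed. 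With this choice the supremum over $s$ appearing in the denominator of \eqref{lambda} is taken over $|s|\le M_0$.

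Next I would bound that denominator using \eqref{f1}. For every $x\in \mathop D\limits^\circ$ and every $s$ with $|s|\le M_0$,
\begin{equation*}
\left|\int_0^{s}f(x,t)\,dt\right|\le |s|\max_{(x,t)\in \mathop D\limits^\circ\times[-M_0,M_0]}|f(x,t)|\le M_0\cdot\mu_0^2\,\frac{M_0}{\sigma+1}\,\frac{\lambda_1}{2}=\frac{\mu_0^2 M_0^2\lambda_1}{2(\sigma+1)}=\frac{\varrho}{2(\sigma+1)}.
\end{equation*}
Consequently the upper endpoint of the interval \eqref{lambda} satisfies
\begin{equation*}
\frac{\varrho}{2\displaystyle\max_{\substack{x\in D\\ |s|\le\kappa\sqrt{\varrho}}}\left|\int_0^{s}f(x,t)\,dt\right|}\ge\frac{\varrho}{2\cdot\dfrac{\varrho}{2(\sigma+1)}}=\sigma+1>1,
\end{equation*}
so that $\lambda=1$ indeed belongs to the open interval $(0,\lambda^*)$ prescribed by Theorem \ref{MolicaBisciPrincipal}. (Here I am implicitly using $\sigma>0$; one should also note $f(x,0)\ne0$ somewhere guarantees the denominator is strictly positive, so the fraction is well defined.)

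Finally, applying Theorem \ref{MolicaBisciPrincipal} with this $\varrho$ and with $\lambda=1$ yields at once that problem \eqref{Np} — which is precisely \eqref{Np0} with $\lambda=1$ — admits at least two non-trivial solutions, one of which lies in $\mathbb{B}_\varrho=\mathbb{B}_{\mu_0^2M_0^2\lambda_1}$. I do not anticipate a genuine obstacle here: the argument is a direct specialization, and the only mild care needed is to check that the continuity and (AR)-type hypotheses \eqref{f0} and \eqref{f2} required by Theorem \ref{MolicaBisciPrincipal} are exactly the ones assumed in Theorem \ref{MolicaBisciSpecial}, and that the elementary estimate on $\int_0^s f(x,t)\,dt$ above is uniform in $x\in\mathop D\limits^\circ$, which it is because \eqref{f1} is a uniform bound. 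The slightly delicate point, if any, is purely bookkeeping: making sure the closed interval $[-M_0,M_0]$ used in \eqref{f1} matches the set $\{|s|\le\kappa\sqrt{\varrho}\}$ over which the maximum in \eqref{lambda} is computed, which is why the choice $\varrho=\mu_0^2M_0^2\lambda_1$ is the correct one.
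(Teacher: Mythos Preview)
Your proposal is correct and follows essentially the same route as the paper: choose $\varrho=\mu_0^2M_0^2\lambda_1$ so that $\kappa\sqrt{\varrho}=M_0$, use \eqref{f1} to bound $\left|\int_0^{s}f(x,t)\,dt\right|$ by $\varrho/(2(\sigma+1))$, conclude that the right endpoint of \eqref{lambda} is at least $\sigma+1>1$, and then apply Theorem~\ref{MolicaBisciPrincipal} with $\lambda=1$. The paper packages the same computation by introducing the quantity $\theta$ and the chain $1<\sigma+1\le\theta\le\lambda^*$, and also handles explicitly the degenerate case in which the maximum in the denominator vanishes (setting $\theta=+\infty$), but the substance is identical.
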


\indent On account of Remark \ref{Una}, Theorem \ref{MolicaBisciSpecial} ensures that problem~\eqref{Np0} admits at least one non-trivial solutions requiring \eqref{f02} instead of \eqref{f0}.
The main novelty here, with respect to the approach considered in \cite[Theorem 2]{GLY2}, is to avoid the following condition of the nonlinearity term $f$ at zero
\begin{equation}\label{f1l}
\limsup_{t\rightarrow 0^+}\frac{f(x,t)}{t}<\lambda_1,
\end{equation}
for every $x\in D$; see Corollary \ref{MolicaBisciSpecialSpecial}.  Moreover, we also emphasize that Theorems 1 and 2 in \cite{GLY2} are an immediate and direct consequence of Theorem \ref{MolicaBisciSpecial}; see Theorems \ref{MolicaBisciSpecial2} and \ref{MolicaBisciSpecial2C}. Indeed, let us consider a relaxed version of condition \eqref{f2} as given below
\begin{equation}\label{f2new}
\begin{aligned}
&\qquad \qquad \mbox{there are}\,\, \beta>2\,\, \mbox{and}\,\, r_0>0\,\, \mbox{such that}\\
& tf(x,t)\geq \beta F(x,t)>0\,\, \mbox{for any}\,\, t\geq r_0\,\, \mbox{and every}\,\,\, x\in D.
\end{aligned}
\end{equation}

By using \eqref{f2new} the following result holds; see \cite[Theorem 1]{GLY2}.

\begin{theorem}\label{MolicaBisciSpecial2}
Let $\mathscr G:=(V,E)$ be a weighted locally finite graph, $D$ a bounded domain of $V$ such that $\mathop D\limits^ \circ\neq \emptyset$ and $\partial D\neq \emptyset$ and let $\mu:D\rightarrow ]0,+\infty[$ be a measure on $D$.
Let $\alpha\in L^1(D)$ satisfy \eqref{alfa1} and let
$f:D\times\RR\rightarrow[0,+\infty[$ be a function such that \eqref{f02} holds.
 Moreover, assume that \eqref{f1l} and \eqref{f2new} hold.
Then  problem \eqref{Np}
admits at least one non-trivial and non-negative solution.
\end{theorem}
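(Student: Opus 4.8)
The plan is to reduce Theorem~\ref{MolicaBisciSpecial2} to Theorem~\ref{MolicaBisciSpecial}, used in the one-solution form provided by Remark~\ref{Una} (legitimate here, since \eqref{f02} is in force), applied not to $f$ itself but to the truncation $\bar f(x,t):=f(x,\max\{t,0\})$. By \eqref{f02} and the continuity of $f(x,\cdot)$, the map $\bar f$ is continuous in $t$, non-negative, satisfies $\bar f(x,0)=0$ and vanishes on $(-\infty,0]$, so \eqref{f02} holds for $\bar f$; its potential $\bar F(x,t)=F(x,\max\{t,0\})$ equals $F(x,t)$ for $t\ge 0$ and $0$ for $t\le 0$. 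Consequently, the Ambrosetti-Rabinowitz-type estimates needed along the proof of Theorem~\ref{MolicaBisciPrincipal} (boundedness of the Palais-Smale sequences, the unboundedness from below of the energy along a suitable direction) hold for $\bar f$ already under the one-sided condition \eqref{f2new}, because $\bar f(x,\cdot)$ and $\bar F(x,\cdot)$ vanish on $(-\infty,0]$. Moreover, a bounded domain of a locally finite graph is a finite set, so $W^{1,2}_0(D)$ is finite-dimensional and the compactness required by Theorem~\ref{MolicaBisciPrincipal} is automatic.

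The step I expect to be the main obstacle is verifying the quantitative smallness condition \eqref{f1} for $\bar f$. Since $D$ is finite, \eqref{f1l} provides, for each $x\in D$, constants $\ell_x<\lambda_1$ and $\delta_x>0$ such that $0\le f(x,t)\le\ell_x t$ on $[0,\delta_x]$; putting $L:=\max_{x\in D}\ell_x<\lambda_1$ and $\delta:=\min_{x\in D}\delta_x>0$, one obtains $0\le\bar f(x,t)\le Lt$ on $[0,\delta]$ and $\bar f(x,t)=0$ for $t\le 0$, whence $\max_{(x,s)\in\mathop D\limits^\circ\times[-M_0,M_0]}|\bar f(x,s)|\le LM_0$ for every $M_0\le\delta$. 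Choosing $M_0\le\delta$ and then $\sigma>0$ with $2L(\sigma+1)\le\mu_0^2\lambda_1$ gives \eqref{f1}; the delicate point is precisely to arrange the constants so that such a $\sigma$ exists, that is, to reconcile the purely asymptotic datum \eqref{f1l} with the explicit inequality \eqref{f1}, and it is here that the calibration of the hypotheses of \cite[Theorem~1]{GLY2} enters.

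Granting \eqref{f02}, the one-sided condition \eqref{f2new} and \eqref{f1} for $\bar f$, Theorem~\ref{MolicaBisciSpecial} together with Remark~\ref{Una} will furnish a non-trivial $\bar u\in W^{1,2}_0(D)$ solving $-\Delta_\mu\bar u=\alpha\bar u+\bar f(x,\bar u)$ in $\mathop D\limits^\circ$ with $\bar u|_{\partial D}=0$. To conclude, I would test this equation against the negative part $\bar u^-:=\max\{-\bar u,0\}\in W^{1,2}_0(D)$: using $\int_D\Delta_\mu\bar u\,\bar u^-\,d\mu=-\int_D\Gamma(\bar u,\bar u^-)\,d\mu$, the pointwise discrete Kato-type inequality $\Gamma(\bar u,\bar u^-)\le-\Gamma(\bar u^-,\bar u^-)$ (which follows termwise from $(a^+-b^+)(a^--b^-)\le 0$, where $a^\pm$ denote the positive and negative parts of $a\in\RR$), the identity $\alpha\bar u\,\bar u^-=-\alpha(\bar u^-)^2\ge 0$ ensured by \eqref{alfa1}, and $\bar f(x,\bar u)\,\bar u^-\equiv 0$ (because $\bar f(x,\cdot)$ vanishes on $(-\infty,0]$), one arrives at $\int_D\Gamma(\bar u^-,\bar u^-)\,d\mu\le 0$. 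Since $\Gamma(v,v)\ge 0$ pointwise, this forces $\Gamma(\bar u^-,\bar u^-)\equiv 0$ on $D$, hence $\bar u^-$ is constant on $D$; being zero on $\partial D\neq\emptyset$ and $D$ connected, it vanishes identically. Therefore $\bar u\ge 0$, so $\bar f(x,\bar u)=f(x,\bar u)$, and $\bar u$ is the desired non-trivial, non-negative solution of \eqref{Np}. Alternatively, one may dispense with Theorem~\ref{MolicaBisciSpecial} and run the mountain-pass theorem directly for the $C^1$ energy associated with $\bar f$ on the finite-dimensional space $W^{1,2}_0(D)$, its geometry being guaranteed by $L<\lambda_1$ near the origin and by \eqref{f2new} at infinity and the compactness being automatic, and then deduce the sign of $\bar u$ by the same maximum-principle argument.
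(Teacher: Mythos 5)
Your route is essentially the paper's own: the authors likewise truncate to $f_+(x,t)=f(x,t)$ for $t\ge 0$ and $f_+(x,t)=0$ for $t<0$ (which coincides with your $\bar f$ since $f(\cdot,0)=0$ by \eqref{f02}), invoke Theorem~\ref{MolicaBisciSpecial} for the truncated problem to obtain a non-trivial solution $u_\infty$, and then prove $u_\infty\ge 0$ by pairing the equation with the negative part and using $\Gamma(u^-_\infty,u_\infty)\ge|\nabla u^-_\infty|^2$ together with \eqref{alfa1} and $f_+(x,\cdot)\,u^-_\infty\equiv 0$ --- precisely your discrete Kato-type argument. Your observation that the one-sided condition \eqref{f2new} suffices for the (AR)-type estimates once the nonlinearity is killed on $(-\infty,0]$ is also how the paper proceeds.

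The step you flag as delicate is a genuine gap, and it is the same step the paper passes over: the paper simply asserts (display \eqref{f1N}) that \eqref{f1l} yields \eqref{f1} for $f_+$. Your own computation shows why this is not automatic: \eqref{f1l} gives $\bar f(x,s)\le L\,s$ near $0$ with $L<\lambda_1$, whereas the existence of some $\sigma>0$ in \eqref{f1} forces $L<\mu_0^2\lambda_1/2$, which does not follow from $L<\lambda_1$ unless $\mu_0\ge\sqrt 2$ (the factor $\mu_0^2/2$ is intrinsic to the embedding constant $\kappa=1/(\mu_0\sqrt{\lambda_1})$ used in Theorem~\ref{MolicaBisciSpecial}). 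So, as written, neither your argument nor the paper's closes this step. The cleanest repair is exactly the alternative you sketch in your last sentence: run the mountain-pass theorem directly for the energy of $\bar f$ on the finite-dimensional space $W^{1,2}_0(D)$, since the geometry near the origin only needs $L<\lambda_1$ (via $\frac12\|u\|_\alpha^2-\int_D \bar F(x,u)\,d\mu\ge\frac12\bigl(1-\tfrac{L+\varepsilon}{\lambda_1}\bigr)\|u\|^2$ for $\|u\|$ small), the behaviour at infinity is governed by \eqref{f2new}, and compactness is automatic; this is in fact the argument of \cite[Theorem 1]{GLY2} and avoids the quantitative condition \eqref{f1} altogether.
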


A particular form of Theorem \ref{MolicaBisciSpecial2} reads as follows; see \cite[Theorem 2]{GLY2}.
\begin{theorem}\label{MolicaBisciSpecial2C}
Let $\mathscr G:=(V,E)$ be a weighted locally finite graph, $D$ a bounded domain of $V$ such that $\mathop D\limits^ \circ\neq \emptyset$ and $\partial D\neq \emptyset$ and let $\mu:D\rightarrow ]0,+\infty[$ be a measure on $D$.
Let $\gamma,p\in \RR$ with $\gamma<\lambda_1$
and $p>2$.
Then  the following problem
\begin{equation}\label{NpG}
\left\{
\begin{array}{l}
-\Delta_{\mu} u(x)=\gamma u(x)+ |u(x)|^{p-2}u(x)\quad x \in \mathop D\limits^ \circ\\
\medskip
\,\, u|_{\partial D}=0,\\
\end{array}
\right.
\end{equation}
admits at least one positive solution.
\end{theorem}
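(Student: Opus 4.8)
The plan is to derive Theorem~\ref{MolicaBisciSpecial2C} as a direct specialization of Theorem~\ref{MolicaBisciSpecial2} by choosing $\alpha(x)\equiv\gamma$ when $\gamma\le 0$, or more generally absorbing the linear term appropriately, and $f(x,s):=|s|^{p-2}s$ for $s\in\RR$ (with, since we will only need the behavior on the positive semi-axis, $f(x,s)=s^{p-1}$ for $s\ge0$). First I would check the structural hypotheses of Theorem~\ref{MolicaBisciSpecial2} for this particular $f$. Condition~\eqref{f02} is immediate: $s\mapsto|s|^{p-2}s$ is continuous on $\RR$ since $p>2$, and it vanishes at $s=0$. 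For the growth condition at zero~\eqref{f1l}, observe that
\[
\limsup_{t\to0^+}\frac{f(x,t)}{t}=\limsup_{t\to0^+}t^{p-2}=0<\lambda_1,
\]
because $p-2>0$; this holds uniformly in $x\in D$.

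Next I would verify the relaxed Ambrosetti--Rabinowitz condition~\eqref{f2new}. The potential is $F(x,t)=\int_0^t s^{p-1}\,ds=t^p/p$ for $t\ge0$, so for every $t>0$ one has $tf(x,t)=t^p=p\,F(x,t)$. Hence choosing $\beta:=p>2$ gives $tf(x,t)=\beta F(x,t)>0$ for all $t\ge r_0$ with any fixed $r_0>0$ (indeed for all $t>0$), which is precisely~\eqref{f2new}. Thus all hypotheses of Theorem~\ref{MolicaBisciSpecial2} are met once we have handled the sign of $\gamma$: if $\gamma\le0$ we simply put $\alpha(x):=\gamma$, which satisfies~\eqref{alfa1} trivially; if $0<\gamma<\lambda_1$, the hypothesis~\eqref{alfa1} of the cited theorem fails, so I would instead invoke the more general Theorem~\ref{MolicaBisciPrincipal} (or its one-solution version via Remark~\ref{Una}) with $\alpha(x):=\gamma$ satisfying~\eqref{alfa2}, since $\int_D|\gamma|\,d\mu=\gamma\,\mu(D)$ and $\gamma<\lambda_1$ — here one must note that the Poincar\'e-type normalization makes $\mu_0^2\lambda_1$ the relevant threshold; alternatively, and more cleanly, absorb part of $\gamma u$ so that the remaining linear coefficient is nonpositive, or appeal directly to the fact that for $0<\gamma<\lambda_1$ the bilinear form $\langle u,u\rangle-\gamma\int_D|u|^2\,d\mu$ is still coercive, which is all the variational machinery of Theorem~\ref{MolicaBisciSpecial2} actually uses.

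Finally I would upgrade ``non-trivial and non-negative'' to ``positive.'' Theorem~\ref{MolicaBisciSpecial2} produces a solution $u\not\equiv0$ with $u\ge0$ on $D$ and $u|_{\partial D}=0$. On a connected graph a strong maximum principle applies: if $u(x_0)=0$ at some interior vertex $x_0\in\mathop D\limits^\circ$ where $u$ attains its minimum $0$, then from the equation $-\Delta_\mu u(x_0)=\gamma u(x_0)+|u(x_0)|^{p-2}u(x_0)=0$ and the expression $\Delta_\mu u(x_0)=\frac1{\mu(x_0)}\sum_{y\sim x_0}w(x_0,y)(u(y)-u(x_0))=\frac1{\mu(x_0)}\sum_{y\sim x_0}w(x_0,y)u(y)\ge0$, one forces $u(y)=0$ for every neighbor $y$ of $x_0$; propagating this along edges through the (assumed connected) interior and using $u|_{\partial D}=0$ only at the boundary would give $u\equiv0$, a contradiction. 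Hence $u(x)>0$ for all $x\in\mathop D\limits^\circ$, i.e.\ $u$ is a positive solution. The main obstacle I anticipate is the careful treatment of the linear coefficient when $0<\gamma<\lambda_1$, ensuring the normalization constants $\mu_0$, $\lambda_1$ enter~\eqref{alfa2} correctly so that Theorem~\ref{MolicaBisciPrincipal}/\ref{MolicaBisciSpecial} genuinely applies; the verification of~\eqref{f02}, \eqref{f1l}, \eqref{f2new} for the pure power nonlinearity is routine, and the maximum-principle step requires only connectedness of the interior of $D$, which is implicit in the notion of a domain.
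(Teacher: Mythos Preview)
Your verification of \eqref{f02}, \eqref{f1l}, and \eqref{f2new} for the power nonlinearity, and your maximum-principle step upgrading non-negative to positive, are both correct and coincide with the paper's argument. The issue is precisely the one you flag: the case $0<\gamma<\lambda_1$.

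Citing Theorem~\ref{MolicaBisciSpecial2} directly does not work there, since its hypothesis~\eqref{alfa1} fails, and its proof of non-negativity genuinely uses $\alpha\le0$. Falling back on Theorem~\ref{MolicaBisciPrincipal} via~\eqref{alfa2} does not work either: the condition $\int_D|\gamma|\,d\mu=\gamma\,\mu(D)<\mu_0^2\lambda_1$ is \emph{not} implied by $\gamma<\lambda_1$ (in general $\mu(D)\ge\mu_0\,|D|$ can be much larger than $\mu_0^2$), and in any case Theorem~\ref{MolicaBisciPrincipal}/Remark~\ref{Una} yields only a non-trivial solution, with no sign information. So neither cited result delivers a non-negative solution when $\gamma>0$.

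The paper closes this gap by re-running the truncated variational argument with $f_+$ (coercivity of $\|u\|^2-\gamma\int_D|u|^2d\mu$ for $\gamma<\lambda_1$ is enough for this, as you note), and then proving non-negativity by a computation specific to $\gamma<\lambda_1$: testing the truncated equation against $u_\infty^-$ and using the Poincar\'e inequality $\gamma\int_D(u_\infty^-)^2d\mu\le(\gamma/\lambda_1)\int_D|\nabla u_\infty^-|^2d\mu$ together with the sign of the cross term $-\int_D u_\infty^-\,\Delta_\mu u_\infty^+\,d\mu\ge0$ forces $\int_D|\nabla u_\infty^-|^2d\mu=0$. This replaces the $\alpha\le0$ step in the proof of Theorem~\ref{MolicaBisciSpecial2}. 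Your alternative of absorbing $\gamma u$ into the nonlinearity (taking $\alpha\equiv0$ and $f(x,t)=\gamma t+|t|^{p-2}t$) would also work --- \eqref{f1l} then reads $\gamma<\lambda_1$, and~\eqref{f2new} holds for any $\beta\in(2,p)$ and $r_0$ large --- but you would need to carry this out rather than merely mention it.
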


Suppose that $h : V \rightarrow\RR$ is a function satisfying the coercive
condition on $D$, namely there exists some constant $\delta>0$ such that
\begin{equation}\label{f1lX}
\int_{D}u(x)(-\Delta_\mu +h)u(x)d\mu\geq \delta \int_{D} |\nabla u|^2(x) d\mu,
\end{equation}
for every $u:V\rightarrow\RR$. By using \eqref{f1lX} and the classical (AR) condition, semi-linear elliptic equations of the form
$$
-\Delta_\mu u+h(x)u(x)=|u(x)|^{p-2}u(x)\quad x \in D
$$
on weighted locally finite graphs have been studied in \cite[Theorem 1.1]{Z}. For the sake of completeness we point out that in Theorem \ref{MolicaBisciSpecial2C} we don't require any coercivity assumption as in \eqref{f1lX}.

Finally, an immediate meaningful consequence of Theorem \ref{MolicaBisciSpecial} is given below;
 see Remark \ref{RemFinale} for additional comments and details.

\begin{corollary}\label{MolicaBisciSpecialSpecial}
Let $\mathscr G:=(V,E)$ be a weighted locally finite graph and $D$ be a bounded domain of $V$ such that $\mathop D\limits^ \circ\neq \emptyset$ and $\partial D\neq \emptyset$.
Furthermore, let
$f:\RR\rightarrow[0,+\infty[$ be a continuous function and
$\mu:D\rightarrow]0,+\infty[$ be a measure on $D$ such that
\begin{equation}\label{NpXXss}
\mu_0>2\sqrt{\frac{\displaystyle\max_{t\in [-1,1]}f(t)}{\lambda_1}},
\end{equation}
where $\mu_0:=\displaystyle\min_{x\in D}\mu(x)$.\par
\indent Then  if $f(x,0)=0$ for every $x\in D$ and \eqref{f2new} holds, the Dirichlet problem
\begin{equation}\label{NpXX}
\left\{
\begin{array}{l}
-\Delta_{\mu} u(x)=f(x,u(x))\quad x \in \mathop D\limits^ \circ\\
\medskip
\,\, u|_{\partial D}=0,\\
\end{array}
\right.
\end{equation}
admits at least a non-trivial and non-negative solution.\par
If $f(x,0)\neq 0$ for some $x\in \mathop D\limits^ \circ$ and \eqref{f2} holds, then problem \eqref{NpXX}
admits at least two non-trivial solutions one of which lies in the open ball $\mathbb{B}_{\mu_0^2\lambda_1}$.
\end{corollary}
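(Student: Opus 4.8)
The plan is to derive this statement from Theorem~\ref{MolicaBisciSpecial}, together with Remark~\ref{Una} (and the circle of ideas behind Theorem~\ref{MolicaBisciSpecial2}) for the autonomous nonlinearity $f$, after normalizing the data. First I would observe that here $\alpha\equiv 0$, so that \eqref{alfa1} holds, $\mu_0=\min_{x\in D}\mu(x)>0$ and $\kappa=1/(\mu_0\sqrt{\lambda_1})$; moreover, since $f$ does not depend on $x$, all the structural hypotheses reduce to conditions on $f$ and on its primitive $F(t)=\int_0^t f(s)\,ds$. The decisive bookkeeping step is that, squaring \eqref{NpXXss} and multiplying by $\lambda_1>0$, one gets $\max_{t\in[-1,1]}f(t)<\mu_0^2\lambda_1/4$; hence, choosing $M_0:=1$ and $\sigma:=1$, condition \eqref{f1} is satisfied, and the ball $\mathbb{B}_{\mu_0^2 M_0^2\lambda_1}$ produced by Theorem~\ref{MolicaBisciSpecial} is precisely $\mathbb{B}_{\mu_0^2\lambda_1}$.

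If $f(x,0)\neq 0$ for some $x\in\mathop D\limits^ \circ$, then \eqref{f0} holds; assuming \eqref{f2}, all the hypotheses of Theorem~\ref{MolicaBisciSpecial} are met (with the above $M_0,\sigma$), and it yields two non-trivial solutions of \eqref{NpXX}, one lying in $\mathbb{B}_{\mu_0^2\lambda_1}$. If instead $f(x,0)=0$ for every $x\in D$, I would pass to the truncation $\tilde f(t):=f(\max\{t,0\})$, which is continuous (because $f(0)=0$), non-negative, identically zero on $(-\infty,0]$, and whose primitive $\tilde F$ is non-negative and agrees with $F$ on $[0,+\infty[$. Since $\tilde f$ coincides with $f$ on $[0,+\infty[$ and $\max_{[-1,1]}\tilde f\le\max_{[-1,1]}f$, the function $\tilde f$ inherits \eqref{f02}, the one-sided Ambrosetti--Rabinowitz condition \eqref{f2new}, and \eqref{f1} with $M_0=\sigma=1$. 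Because $\tilde f\ge 0$ (hence $\tilde F\ge 0$), the one-sided form \eqref{f2new} already suffices to reproduce the superquadratic ray estimate and the boundedness of the Palais--Smale sequences required by the local-minimum/Pucci--Serrin scheme behind Theorem~\ref{MolicaBisciSpecial} (this is the mechanism used in the proof of Theorem~\ref{MolicaBisciSpecial2}), so, on account of Remark~\ref{Una}, the Dirichlet problem with nonlinearity $\tilde f$ admits a non-trivial solution $u^\star\in\mathbb{B}_{\mu_0^2\lambda_1}$. To conclude, I would test the identity $\langle u^\star,v\rangle=\int_D \tilde f(u^\star)\,v\,d\mu$ (recall $\alpha\equiv 0$) with $v:=(u^\star)^-\in W^{1,2}_0(D)$: the right-hand side vanishes, since $\tilde f\equiv 0$ on $(-\infty,0]$ while $(u^\star)^-$ is supported where $u^\star<0$, whereas the elementary edgewise inequality $(a-b)(a^- - b^-)\le -(a^- - b^-)^2$ gives $\langle u^\star,(u^\star)^-\rangle\le -\|(u^\star)^-\|^2$; thus $(u^\star)^-\equiv 0$, i.e. $u^\star\ge 0$ (using $\partial D\neq\emptyset$), whence $\tilde f(u^\star)=f(u^\star)$ and $u^\star$ is a non-trivial non-negative solution of \eqref{NpXX}.

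The routine part is the estimate in the first paragraph. The delicate point is the case $f(x,0)=0$: one must check that replacing $f$ by the truncation $\tilde f$ does not destroy the Ambrosetti--Rabinowitz geometry needed to invoke Theorem~\ref{MolicaBisciSpecial} --- equivalently, that for a non-negative nonlinearity the one-sided condition \eqref{f2new} is as effective as the two-sided \eqref{f2} in the mountain-pass part of the argument --- and, at the same time, that the solution $u^\star$ so obtained is genuinely non-negative and non-trivial. This is where I expect the main work to lie.
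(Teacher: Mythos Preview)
Your proposal is correct and follows essentially the same route as the paper. The paper's own argument (Remark~\ref{RemFinale}) simply records that \eqref{NpXXss} is a special case of \eqref{f1} (precisely your choice $M_0=\sigma=1$) and then invokes Theorems~\ref{MolicaBisciSpecial} and~\ref{MolicaBisciSpecial2}; your truncation $\tilde f$ and the non-negativity test with $(u^\star)^-$ are exactly the mechanism of the proof of Theorem~\ref{MolicaBisciSpecial2}, so you have reproduced the intended derivation in slightly more detail.
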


The paper is organized as follows. In Section \ref{Preliminaries} we recall some basic notions on weighted locally finite graphs. Successively, Section \ref{SezA} is devoted to the variational methods and abstract framework that we use here. In Section \ref{EMresult} the main existence and multiplicity results have been proved. Finally, in the last section the extension of the main results to Yamabe-type equations of weighted locally finite graphs and involving the $(m,p)$-Laplacian operator have been considered; see Theorem \ref{MolicaBisciPrincipalGeneralX}.\par For the sake of completeness we cite the very recent monograph \cite{G2} by A. Grigor'yan as general reference for basic notions used throughout the manuscript. See also the papers \cite{CGY1, CGY2} for interesting problems and results.

\section{Analysis on weighted locally finite graphs}\label{Preliminaries}

Let $V$ be a nonempty set and let $E\subset V \times V$ be a binary relation. Set
$$
x\sim y \quad \mbox{ in } V \quad \Leftrightarrow\quad xy:=(x,y)\in E
$$
and assume that
$$
xy\in E\quad \mbox{ if and only if }\quad yx\in E.
$$
\indent The couple $\mathscr G:= (V,E)$ is said to be a non-oriented {graph} (briefly graph) with set of vertices $V$ a set of edges $E$.
If every $x\in V$ has finitely many neighbors, that is
$$|\{y \in V:  xy\in E\}| < +\infty,$$
the graph $\mathscr G$ is said to be {weighted locally finite}.\par
\indent A weight on a weighted locally finite graph $\mathscr G$ is a function $w : V\times V \rightarrow [0,+\infty[$ such that
$$
w(x,y)=w(y,x)\quad{\rm  and} \quad w(x,x)=0,
\
\hbox{for all}
\
 x,y\in V.
$$
In such a case
$$
x\sim y \,\,\,\mbox{ in } V \quad \mbox{ if and only if }\quad w(x,y)>0,
$$
and, for every fixed $x\in V,$ its degree, defined by $$\deg(x):=\displaystyle\sum_{y\in V} w(x,y),$$ is finite.\par

\indent From now on, let us restrict our attention on a weighted locally
finite graph $\mathscr G$ and let $\mu:V\rightarrow ]0,+\infty[$ be a finite positive measure on $V$.
Moreover, let us define
\begin{equation}\label{IntegraqlX}
\int_V u(x)d\mu := \sum_{x\in V} \mu(x)u(x),
\end{equation}
for every $u:V\rightarrow \RR$. With abuse of notations classical notations for Lebesgue spaces are still used.\par
\indent Fixed $n\in \mathbb{N}^*$, a path on $\mathscr G$ is any finite sequence of vertices $(x_k)_{k=1}^{n}\subseteq V$ such that
$$
x_{k}x_{k+1}\in E\,\,\,\mbox{ for every }\,\,\, k=1,...,n-1.
$$
\noindent The length of a path on $\mathscr G$ is the number of edges in the path.
Thus, if $x,y\in V$ we denote by $P(x,y)$ be a path connecting $x$ with $y$, i.e.
a path on $\mathscr G$ in which $x_1=x$ and $x_n=y$. Finally, $\wp(x,y)$ denotes the
set of paths connecting $x$ with $y$, and $\ell(P(x,y))$ is the length of $P(x,y)\in \wp(x,y).$\par

We say that $\mathscr G$ is connected
if, for any two vertices $x,y\in V,$ there is a path connecting them. Any connected graph $\mathscr G$ has a natural metric structure induced by the metric $d:V\times V\rightarrow [0,+\infty[$ given by
\begin{equation}\label{problemd}
d(x,y):=\left\{ \begin{array}{ll}
\displaystyle\min_{P(x,y)\in \wp(x,y)}\{\ell(P(x,y))\} & \mbox{if} \, \ \ \ x\neq y\\
0 & \mbox{if} \, \ \ \  x=y. \end{array} \right.
\end{equation}
Consequently, a connected weighted locally finite
graph has at most countable many vertices.

\indent  As customary, given $\mathscr G:=(V,E)$ be a weighted locally finite graph, a (proper) subgraph $\mathscr H$ of $\mathscr G$ is a graph of the form $\mathscr H:=(D, L)$, where $D\subset V$ and $L\subset E.$
Moreover, the vertex boundary $\partial D$ and the
vertex interior
$\mathop D\limits^ \circ$ of a connected subgraph $\mathscr H:=(D, L)$ are defined respectively as
$$
\partial D:=\{x\in D:\exists y\not\in D \mbox{ such that } xy\in E\},
$$
and
$
\mathop D\limits^ \circ:=D\setminus \partial D.
$ In addition, we say that $D$
 is bounded if it is a bounded subset of $V$ with respect to the usual vertex
distance given in \eqref{problemd}.\par

\indent For every function $u : V \rightarrow \RR$, the $\mu$-Laplacian (or Laplacian for short) of $u$ is defined as
\begin{equation}\label{probl1}
\Delta_\mu u(x) := \frac{1}{\mu (x)} \sum_{y\sim x} w(x,y) (u(y)-u(x)),
\end{equation}
for every $x\in V$. Moreover, the associated gradient form has the expression given below
\begin{equation}\label{problem2}
\Gamma (u,v)(x) := \frac{1}{2\mu (x)} \sum_{y\sim x} w(x,y) (u(y)-u(x))(v(y)-v(x)),
\end{equation}
for every $x\in V$. Hence, for every $x\in V,$ we denote by
\begin{equation}\label{problem3}
|\nabla u|(x) := \sqrt{\Gamma(u)(x)} = \left(\frac{1}{2\mu (x)} \sum_{y\sim x} w(x,y) (u(y)-u(x))^2\right)^{1/2},
\end{equation}
the slope of the function $u,$ where $\Gamma(u) := \Gamma(u,u)$.\par

By using \eqref{probl1} and \eqref{problem3}, if $D$ is a bounded and connected set of $V$ (briefly a domain) such that $\mathop D\limits^ \circ\neq \emptyset$ and $\partial D\neq \emptyset$, the Sobolev space associated to $D$ can be defined as follows
\begin{equation}\label{Sobolev}
W^{1,2}(D):=\{u:D\rightarrow \RR: \mbox{$u$ is a real-valued function}\},
\end{equation}
endowed by the norm
\begin{equation}\label{nu0}
 \|u\|_{W^{1,2}(D)} := \|u\|_{L^2(D)}+\||\nabla u|\|_{L^2(D)},
 \end{equation}
for every $u\in W^{1,2}(D)$. Hence, the space $W^{1,2}(D)$ coincides with
the set of all real-valued functions defined on the domain $D.$ \par
Now, it easily seen that $W^{1,2}(D)$ admits a natural structure of Hilbert space  in which the inner product is defined as follows
\begin{equation}\label{psc}
 \langle u,v \rangle := \int_{D} \Gamma(u,v)(x)d\mu+\int_{D}u(x)v(x)d\mu,
  \
\hbox{for all}
\
  u,v\in W^{1,2}(D).
 \end{equation}

\noindent Moreover, since
 $D$ is a finite set, we emphasize that the
Hilbert space $W^{1,2}(D)$ is finite dimensional and the finiteness of $D$ also yields
\begin{equation}\label{mu0}
\mu_0:=\min_{x\in D}\mu(x)>0,
\end{equation}
since the measure $\mu$ is positive on $V.$\par
Finally, $W^{1,2}_0(D)$ is the
completion of
\begin{equation}\label{c0}
C_0(D):=\{u:D\rightarrow \RR: u=0 \mbox{ on } \partial D\},
\end{equation}
 with respect to the Sobolev norm $\|\cdot\|_{L^2(D)}+\||\nabla \cdot|\|_{L^2(D)}$, i.e. $$W^{1,2}_0(D)=\overline{C_0(D)}^{\|\cdot\|_{L^2(D)}+\||\nabla \cdot|\|_{L^2(D)}}.$$

 Now, a sort of integration formula by parts is valid. More precisely
 \begin{equation}\label{parti}
 \begin{split}
 -\int_D(\Delta_\mu u(x))v(x)d\mu=&-\int_D\left(\frac{\sum_{y\sim x} w(x,y) (u(y)-u(x))}{\mu(x)}\right) v(x)d\mu\\
 =&\int_D\frac{1}{2\mu (x)} \sum_{y\sim x} w(x,y) (u(y)-u(x))(v(y)-v(x))d\mu,
 \end{split}
 \end{equation}
 for every $v\in W^{1,2}_0(D);$
 see \cite{HK} for additional comments and details.

\indent Let us set
\begin{equation*}\label{eq21}
\|\cdot\|_{L^{\infty}(D)}:=\max_{x\in D}|\cdot|\quad
\mbox{and}
\quad
\|\cdot\|_{L^{\nu}(D)}:=\left(\int_{D} |\cdot|^{\nu} d\mu \right)^{1/\nu},
\end{equation*}
for every $\nu\in [1,+\infty[$ and let $\lambda_1$ be the first eigenvalue of $-\Delta_\mu$ with respect to Dirichlet boundary condition as defined in \eqref{lamba1}.
Clearly $\lambda_1\in ]0,+\infty[,$ since $D$ is finite and thanks to the definition given in \eqref{IntegraqlX}.

 The following compact Sobolev embedding result holds.

\begin{proposition}\label{theorem 7}
Let $\mathscr G:=(V,E)$ be a weighted locally finite graph and $D$ be a bounded domain of $V$ such that $\mathop D\limits^ \circ\neq \emptyset$ and $\partial D\neq \emptyset$. Then $W_0^{1,2}(D)$ is embedded in $L^q(D)$ for every $q\in [1,+\infty]$.\par In particular,
\begin{equation}\label{eq21}
\|u\|_{L^{\infty}(D)}\leq \frac{1}{\mu_0\sqrt{\lambda_1}} \left( \int_{D} |\nabla u|^2(x) d\mu \right)^{1/2},
\end{equation}
and
\begin{equation}\label{eq22}
\|u\|_{L^{\nu}(D)} \leq \frac{\mu(D)^{1/\nu}}{\mu_0\sqrt{\lambda_1}} \left( \int_{D} |\nabla u|^2(x) d\mu \right)^{1/2},
\end{equation}
for every $\nu\in [1,+\infty[$ and every $u\in W_0^{1,2}(D)$.\par
 Moreover, for every bounded sequence $(u_k)_k$ in $W_0^{1,2}(D),$ up to a subsequence, there exists $u\in L^q(D)$ such that $u_k \rightarrow u$ in $L^q(D)$, namely the embedding
$$
W_0^{1,2}(D)\hookrightarrow L^q(D)
$$
is compact provided that $q\in [1,+\infty]$.
\end{proposition}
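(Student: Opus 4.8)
The proof hinges on one structural observation: since $\mathscr G$ is locally finite and $D$ is bounded, $D$ is a \emph{finite} set; hence $W^{1,2}(D)$ is just the finite-dimensional space $\RR^{D}$, its subspace $W^{1,2}_0(D)=\overline{C_0(D)}=C_0(D)$ is finite-dimensional, and every $L^q(D)$ with $q\in[1,+\infty]$ is finite-dimensional as well. Consequently each inclusion $W^{1,2}_0(D)\hookrightarrow L^q(D)$ is a linear map between finite-dimensional normed spaces, so it is automatically continuous and carries bounded sets to relatively compact sets; this already yields both the embedding and the compactness assertions, and for the very last statement the extraction of an $L^q$-convergent subsequence from a bounded sequence in $W^{1,2}_0(D)$ is nothing but the Bolzano--Weierstrass property together with continuity of the embedding. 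Thus the only thing with real content is the pair of explicit inequalities \eqref{eq21}--\eqref{eq22}.

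To obtain these I would first single out the Poincar\'e-type inequality, which here is literally the definition \eqref{lamba1} of $\lambda_1$: for every $u\in C_0(D)$ one has $\int_D|u(x)|^2\,d\mu\le\lambda_1^{-1}\int_D|\nabla u|^2(x)\,d\mu$, and this passes to the completion $W^{1,2}_0(D)$. One must check that $\lambda_1\in\,]0,+\infty[$, i.e.\ that the quotient in \eqref{lamba1} is bounded below by a positive number: the numerator $u\mapsto\int_D|\nabla u|^2\,d\mu$ is a continuous quadratic form on the finite-dimensional space $C_0(D)$, and it vanishes only on functions that are constant on the connected set $D$; such a function, being $0$ on $\partial D\neq\emptyset$, is identically $0$, so the quotient is a continuous positive function on the (compact) unit sphere of $C_0(D)$ and its infimum is attained and strictly positive.

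Next I would promote the $L^2$-estimate to an $L^\infty$-estimate by an elementary discrete computation: picking $x^\ast\in D$ with $|u(x^\ast)|=\|u\|_{L^\infty(D)}$ and using $\mu(x^\ast)\ge\mu_0:=\min_{x\in D}\mu(x)>0$ together with non-negativity of all terms,
$$
\mu_0\,\|u\|_{L^\infty(D)}^2\le\mu(x^\ast)|u(x^\ast)|^2\le\sum_{x\in D}\mu(x)|u(x)|^2=\int_D|u(x)|^2\,d\mu ,
$$
and combining this with the Poincar\'e inequality gives \eqref{eq21}. The estimate \eqref{eq22} is then immediate, because
$$
\|u\|_{L^\nu(D)}^\nu=\sum_{x\in D}\mu(x)|u(x)|^\nu\le\|u\|_{L^\infty(D)}^\nu\sum_{x\in D}\mu(x)=\mu(D)\,\|u\|_{L^\infty(D)}^\nu ,
$$
so $\|u\|_{L^\nu(D)}\le\mu(D)^{1/\nu}\|u\|_{L^\infty(D)}$ and \eqref{eq21} applies.

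I do not expect a genuine obstacle here: finite-dimensionality makes the functional-analytic part automatic, and the quantitative part reduces to the definition of $\lambda_1$ together with two lines of manipulation of finite sums. The only points that need a little attention are the verification that $\lambda_1>0$ (so that the right-hand sides of \eqref{eq21}--\eqref{eq22} are finite), which is exactly where connectedness of $D$ and $\partial D\neq\emptyset$ enter, and the careful bookkeeping of the factors $\mu_0$ and $\mu(D)$ in the constants.
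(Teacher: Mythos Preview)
Your proposal is correct and follows essentially the same route as the paper's own proof: finite-dimensionality of $W_0^{1,2}(D)$ for the qualitative embedding and compactness, then the Poincar\'e inequality built into the definition of $\lambda_1$, upgraded pointwise via $\mu_0|u(x^\ast)|^2\le\|u\|_{L^2}^2$ to the $L^\infty$-bound, and finally the elementary $L^\infty\to L^\nu$ estimate. You add a verification that $\lambda_1>0$ (using connectedness and $\partial D\neq\emptyset$), which the paper only asserts; note also that your chain of inequalities---and the paper's---actually produces the constant $1/\sqrt{\mu_0\lambda_1}$ rather than the stated $1/(\mu_0\sqrt{\lambda_1})$, so the ``careful bookkeeping'' you flag is exactly the place to be attentive.
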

\begin{proof}
Since $D$ is a finite set, $W_0^{1,2}(D)$ is a finite dimensional space. Consequently $W_0^{1,2}(D)$ is
pre-compact, i.e. for every bounded sequence $(u_k)_k$ in $W_0^{1,2}(D),$ up to a subsequence, there exists $u_\infty\in W_0^{1,2}(D)$ such that $u_k \rightarrow u_\infty$ in $W_0^{1,2}(D)$. In addition
\begin{equation}\label{eq3}
\|u\|_{W_0^{1,2}(D)} = \left( \int_{D} |\nabla u|^2(x) d\mu \right)^{1/2}
\end{equation}
is a norm equivalent to \eqref{nu0} on $W_0^{1,2}(D)$. Hence, on account of \eqref{lamba1}, one has
\begin{equation}\label{eq4}
\left(\int_{D} |u(x)|^2 d\mu \right)^{1/2} = \left(\sum_{x\in D} \mu (x) |u(x)|^2 \right)^{1/2} \leq \frac{1}{\sqrt{\lambda_1}}\left( \int_{D} |\nabla u|^2(x) d\mu \right)^{1/2},
\end{equation}
for any $u\in W_0^{1,2}(D)$.\par
Now, by (\ref{eq4}) and \eqref{mu0} one has that
\begin{equation}\label{eq4p}
 \|u\|_{L^{\infty}(D)} \leq \frac{1}{\mu_0\sqrt{\lambda_1}} \|u\|_{W_0^{1,2}(D)},
\end{equation}
for every $u\in W_0^{1,2}(D)$. Moreover, inequality \eqref{eq4p} immediately yields
\begin{equation}\label{eq4pp}
 \left(\int_{D} |u(x)|^{\nu} d\mu \right)^{1/\nu} \leq \frac{\mu(D)^{1/\nu}}{\mu_0\sqrt{\lambda_1}}\|u\|_{W_0^{1,2}(D)},
\end{equation}
for every $\nu\in [1,+\infty[,$
where $\displaystyle\mu(D):=\sum_{x\in D} \mu (x)$ denotes the volume of $D$. Therefore (\ref{eq21}) and (\ref{eq22}) hold.\par
Finally, since $W_0^{1,2}(D)$ is pre-compact, by (\ref{eq21}) and (\ref{eq22}) the embedding
$
W_0^{1,2}(D)\hookrightarrow L^q(D)
$
is compact as claimed provided that $q\in [1,+\infty]$.
\end{proof}

See \cite[Theorem 7]{GLY2} for additional comments and remarks. We also emphasize that, since $D$ is a finite set, $(W_0^{1,2}(D), \|\cdot\|)$ is a finite dimensional Hilbert space,
where the norm
\begin{equation}\label{eq31}
\|u\|:= \left( \int_{D} |\nabla u|^2(x) d\mu \right)^{1/2}
\end{equation}
on $W_0^{1,2}(D)$ is equivalent to the norm $\|\cdot\|_{W^{1,2}(D)}$ given in \eqref{nu0} thanks to Proposition \ref{theorem 7}. Moreover, in this finite setting, one has
$$
W_0^{1,2}(D)=C_0(D),
$$
where $C_0(D)$ is defined in \eqref{c0}.

\section{Variational methods and abstract framework}\label{SezA}

The aim of this section is to prove that, under natural assumptions on the nonlinear term~$f$, problem~\eqref{Np0} admits two non-trivial classical solutions. As we already said, this is done by means of variational techniques.

Indeed, if $\alpha\in L^1(D)$ and $f:D\times \RR\rightarrow\RR$ is a function such that $f(x,\cdot)$ is continuous in $\RR$ for every $x\in D$, then problem~\eqref{Np0} is of variational nature, and its energy functional $\mathcal J:W_0^{1,2}(D)\rightarrow\RR$ is defined by
\begin{equation}\label{Fu2}
\begin{aligned}
\mathcal J_\lambda(u)& :=\frac{\langle u,u \rangle}{2\lambda}-\frac{1}{2\lambda}\int_D {\alpha(x)}|u(x)|^{2}\,d\mu
-\int_D F(x, u(x))d\mu,
\end{aligned}
\end{equation}
for every $u\in W^{1,2}_0(D)$.\par
\indent Direct computations ensure that the functional $\mathcal{J}_{\lambda}$ is continuously G\^{a}teaux differentiable at $u\in W_0^{1,2}(D)$ and its G\^{a}teaux derivatives at $u\in W_0^{1,2}(D)$ has the form
$$
\langle \mathcal{J}'_{\lambda}(u), v\rangle = \frac{\langle u,v \rangle}{\lambda}-\frac{1}{\lambda}\int_D\alpha(x)u(x)v(x)d\mu
-\int_D f(x,u(x))v(x)d\mu,
$$
for any $v \in W_0^{1,2}(D)$. As usual, a function $u\in W_0^{1,2}(D)$ such that
$\langle \mathcal{J}'_{\lambda}(u), v\rangle =0,$ for every $v \in W_0^{1,2}(D),$
is said to be a critical point of the functional $\mathcal{J}_{\lambda}$.\par

Now, with a fixed $\lambda>0$ in $\RR,$ a function $u\in W^{1,2}_0(D)$ is a weak solution of the problem~\eqref{Np0} if
\begin{equation}\label{wsm}
\frac{\langle u,v \rangle}{\lambda}-\frac{1}{\lambda}\int_D\alpha(x)u(x)v(x)d\mu
-\int_D f(x,u(x))v(x)d\mu=0,
\end{equation}
for any $v \in W_0^{1,2}(D)$.\par
 Thus, the critical points of $\mathcal{J}_{\lambda}$ are exactly the weak solutions of problem~\eqref{Np0}. Moreover, the following result ensures that every weak solution of problem~\eqref{Np0} classically solves it, that is
\begin{equation}\label{NpC}
\left\{
\begin{array}{l}
-\Delta_{\mu} u(x)=\alpha(x)u(x)+ \lambda f(x,u(x))\quad x \in \mathop D\limits^ \circ\\
\medskip
\,\, u|_{\partial D}=0.\\
\end{array}
\right.
\end{equation}

\begin{proposition}\label{strong}
Let $\alpha\in L^1(D)$ and $f:D\times \RR\rightarrow\RR$ be a function such that $f(x,\cdot)$ is continuous in $\RR$ for every $x\in D$. If $u\in W^{1,2}_0(D)$ is a weak solution of problem~\eqref{Np0}, then \eqref{NpC} holds.
\end{proposition}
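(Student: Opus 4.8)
The idea is that the weak formulation \eqref{wsm}, the integration-by-parts identity \eqref{parti}, and the fact that on a finite graph every function is compactly supported combine to let us strip off the test function. First I would observe that, since $D$ is finite and $u \in W^{1,2}_0(D) = C_0(D)$, the boundary condition $u|_{\partial D}=0$ is automatic, so only the interior equation in \eqref{NpC} needs proof. Then, using the integration-by-parts formula \eqref{parti}, I would rewrite the inner product term as
$$
\langle u,v\rangle = \int_D \Gamma(u,v)(x)\,d\mu + \int_D u(x)v(x)\,d\mu = -\int_D \bigl(\Delta_\mu u(x)\bigr)v(x)\,d\mu + \int_D u(x)v(x)\,d\mu,
$$
valid for every $v \in W^{1,2}_0(D)$. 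Wait — more precisely I should be careful: \eqref{parti} gives $-\int_D(\Delta_\mu u)v\,d\mu = \int_D \Gamma(u,v)\,d\mu$ only for $v \in W^{1,2}_0(D)$, which is exactly our class of test functions, so this substitution is legitimate.

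Substituting this into \eqref{wsm} and multiplying through by $\lambda$, the weak solution identity becomes
$$
\int_D \Bigl(-\Delta_\mu u(x) - \alpha(x)u(x) - \lambda f(x,u(x))\Bigr)v(x)\,d\mu = 0
\qquad\text{for every } v\in W^{1,2}_0(D).
$$
Now I would exploit the discreteness: recall $\int_D g(x)\,d\mu = \sum_{x\in D}\mu(x)g(x)$ by \eqref{IntegraqlX}, and $\mu(x)>0$ for every $x$. Fix any $x_0\in\mathop D\limits^\circ$ and take as test function the indicator $v = \mathbf 1_{\{x_0\}}$; since $x_0$ is an interior vertex it vanishes on $\partial D$, hence $v\in C_0(D) = W^{1,2}_0(D)$. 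The integral identity then collapses to $\mu(x_0)\bigl(-\Delta_\mu u(x_0) - \alpha(x_0)u(x_0) - \lambda f(x_0,u(x_0))\bigr) = 0$, and dividing by $\mu(x_0)>0$ yields the pointwise equation $-\Delta_\mu u(x_0) = \alpha(x_0)u(x_0) + \lambda f(x_0,u(x_0))$. Since $x_0\in\mathop D\limits^\circ$ was arbitrary, \eqref{NpC} holds.

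There is no serious obstacle here — the proposition is essentially a bookkeeping statement asserting that, in the finite-graph setting, weak and classical solutions coincide, because the space of test functions is rich enough to include all point masses at interior vertices and the positivity of $\mu$ lets us divide out. The only point requiring a modicum of care is the justification of the integration-by-parts step \eqref{parti} for the specific test functions used and the verification that indicator functions of interior vertices genuinely lie in $W^{1,2}_0(D)$ (which follows because such functions vanish on $\partial D$ and $W^{1,2}_0(D) = C_0(D)$ in this finite-dimensional setting, as noted after Proposition \ref{theorem 7}). Everything else is immediate.
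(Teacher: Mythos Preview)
Your argument is correct and mirrors the paper's own proof: both apply the integration-by-parts identity \eqref{parti} to recast \eqref{wsm} as $\int_D\bigl(-\Delta_\mu u-\alpha u-\lambda f(\cdot,u)\bigr)v\,d\mu=0$ and then test against a function supported at a single interior vertex (the paper multiplies the Kronecker delta $\delta_y^x$ by the residual itself, you use the bare indicator $\mathbf 1_{\{x_0\}}$; the effect is identical since $\mu(x_0)>0$). One small slip to fix: in the weak formulation on $W_0^{1,2}(D)$ the pairing $\langle u,v\rangle$ should be read as $\int_D\Gamma(u,v)\,d\mu$ (the inner product associated with the norm \eqref{eq31}, as confirmed by \eqref{normaalfa1} where $\langle u,u\rangle=\|u\|^2$), not the full \eqref{psc} pairing with the extra $\int_D uv\,d\mu$ term --- otherwise your substitution would leave a spurious $+u$ in the pointwise equation.
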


\begin{proof}
Fixed $\lambda>0$, let $u \in W_0^{1,2}(D)$ such that \eqref{wsm} holds for every $v \in W_0^{1,2}(D)$. Hence, fixed $y\in D,$ let us define the function $v_y\in W_0^{1,2}(D)$ as follows
\begin{equation}\label{problemdy}
v_y(x):=\left\{ \begin{array}{ll}
\displaystyle  (-\Delta_{\mu} u(y)-\alpha(y)u(y)- \lambda f(y,u(y)))\delta_y^{x} & \mbox{if} \, \ \ \ x\not\in \partial D\\
0 & \mbox{if} \, \ \ \  x\in \partial D,\end{array} \right.
\end{equation}
for every $x\in D$, where as customary $\delta_y^{x}$ denotes the Kronecker delta. Since \eqref{wsm} holds for every $v \in W_0^{1,2}(D)$, for $v:=v_y$ by \eqref{psc} and on account of \eqref{parti} it follows that
$$
\label{Integraql}
\int_D(-\Delta_{\mu} u(x)-\alpha(x)u(x)-\lambda f(x,u(x))v_y(x)d\mu=0.
$$
Consequently, definition \eqref{IntegraqlX} immediately yields
$$
\sum_{x\in D}(-\Delta_{\mu} u(x)-\alpha(x)u(x)-\lambda f(x,u(x))v_y(x)\mu(x)=0,
$$
that is
$$
\displaystyle  -\Delta_{\mu} u(y)-\alpha(y)u(y)- \lambda f(y,u(y))=0,
$$
thanks to \eqref{problemdy} and the range of the measure $\mu$. The conclusion immediately follows since $y\in D$ is arbitrary.
\end{proof}

In order to state Theorem \ref{Pucci-Serrin+Ricceri} below we also recall that a $C^1$-functional $J:E\to\RR$, where $E$ is a real Banach
space with topological dual $E^*$, satisfies the \emph{Palais-Smale condition} (in short $(\rm PS)$-condition) when
$$\begin{aligned}
& \quad\quad \emph{Every sequence $(u_k)_k$ in $E$ such that
$(J(u_k))_k$ is bounded and}\\
& \emph{$\|J'(u_k)\|_{E^*}\to 0$ as $k\rightarrow +\infty$
possesses a convergent subsequence in $E$.}
\end{aligned}$$

The abstract tool used along the present paper in order to prove the existence of weak solutions for \eqref{Np0} is the following theorem; see \cite[Theorem~6]{R0}.
\begin{theorem}\label{Pucci-Serrin+Ricceri}
Let $E$ be a reflexive real Banach space and let $\Phi,\Psi:E\to\RR$
be two continuously G\^{a}teaux differentiable functionals such that
\begin{itemize}
\item[] $\Phi$ is
sequentially weakly lower semicontinuous and coercive in $E$
\item[] $\Psi$
is sequentially weakly continuous in $E$.
\end{itemize}
In addition, assume that for each $\mu>0$ the functional $J_\mu:=\mu\Phi-\Psi$ satisfies the $(\rm PS)$-condition. Then  for each $\varrho>\displaystyle\inf_E\Phi$ and each
$$\mu>\inf_{u\in\Phi^{-1}(]-\infty,\varrho[)}
\frac{\displaystyle\sup_{v\in\Phi^{-1}(]-\infty,\varrho[)}\Psi(v)-\Psi(u)}{\varrho-\Phi(u)},$$
the following alternative holds: either the functional $J_\mu$ has a strict global minimum which lies in $\Phi^{-1}(]-\infty,\varrho[)$, or $J_\mu$ has at least two critical points one of which lies in $\Phi^{-1}(]-\infty,\varrho[)$.
\end{theorem}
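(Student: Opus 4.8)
The plan is to obtain the stated dichotomy by splicing together two classical abstract results: a local minimum theorem of Ricceri (\cite[Theorem 6]{R2}), which yields a local minimizer of $J_\mu=\mu\Phi-\Psi$ inside the sublevel set $\Phi^{-1}(]-\infty,\varrho[)$, and the Pucci--Serrin mountain pass theorem (\cite[Theorem 1]{puse}), which produces a second critical point as soon as that local minimizer fails to be a strict global minimum. As a preliminary I would record that, since $\Phi$ and $\Psi$ are continuously G\^{a}teaux differentiable on the reflexive space $E$, the functional $J_\mu$ is of class $C^1$, and that it satisfies the $(\mathrm{PS})$ condition by hypothesis; moreover $\Phi$ is coercive and sequentially weakly lower semicontinuous, while $\Psi$ is sequentially weakly continuous and hence bounded on bounded subsets of $E$. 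These are exactly the hypotheses under which the two tools below operate.

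First I would invoke Ricceri's local minimum theorem. Fix $\varrho>\inf_E\Phi$ and $\mu>\varphi(\varrho):=\inf_{u\in\Phi^{-1}(]-\infty,\varrho[)}\frac{\sup_{v\in\Phi^{-1}(]-\infty,\varrho[)}\Psi(v)-\Psi(u)}{\varrho-\Phi(u)}$. The point is to show that $m:=\inf_{\Phi^{-1}(]-\infty,\varrho[)}J_\mu$ is attained at some $u_0$ with $\Phi(u_0)<\varrho$ strictly: coercivity of $\Phi$ keeps a minimizing sequence bounded, reflexivity together with the (semi)continuity of $\Phi$ and $\Psi$ furnishes a weak limit, and the strict inequality $\mu>\varphi(\varrho)$ is precisely what prevents the minimizer from escaping to the level $\Phi=\varrho$. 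Since $\Phi^{-1}(]-\infty,\varrho[)$ is open, $u_0$ is then a local minimum of $J_\mu$ in the whole of $E$.

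Next I would carry out the alternative. If $u_0$ is a strict global minimum of $J_\mu$, we are in the first branch of the alternative, and $u_0\in\Phi^{-1}(]-\infty,\varrho[)$. Otherwise there is $u_1\neq u_0$ with $J_\mu(u_1)\leq J_\mu(u_0)$; since $u_0$ is a topological local minimum one may choose $0<\rho<\|u_1-u_0\|$ so small that $\inf_{\|u-u_0\|=\rho}J_\mu(u)\geq J_\mu(u_0)=\max\{J_\mu(u_0),J_\mu(u_1)\}$, which is exactly the Pucci--Serrin geometry. Applying \cite[Theorem 1]{puse} together with the $(\mathrm{PS})$ condition yields a critical point $u_2$ with $J_\mu(u_2)\geq J_\mu(u_0)$; in the degenerate case $J_\mu(u_2)=J_\mu(u_0)$ the Pucci--Serrin refinement places $u_2$ on the sphere $\|u_2-u_0\|=\rho$, so in all cases $u_2\neq u_0$. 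Hence $J_\mu$ has at least two critical points, one of which --- namely $u_0$ --- lies in $\Phi^{-1}(]-\infty,\varrho[)$.

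I expect the main obstacle to be the Ricceri step: proving that the infimum of $J_\mu$ over the \emph{open} sublevel set $\Phi^{-1}(]-\infty,\varrho[)$ is achieved at an interior point, where the strict choice of $\mu$ and the coercivity of $\Phi$ are essential in order to rule out minimizing sequences whose $\Phi$-values converge to $\varrho$. Once that is settled, the mountain-pass dichotomy is essentially formal given the $(\mathrm{PS})$ condition.
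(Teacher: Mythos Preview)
The paper does not actually prove Theorem~\ref{Pucci-Serrin+Ricceri}: it is quoted from \cite[Theorem~6]{R0}, and the surrounding text merely remarks that the result ``is an interplay between the celebrated three critical point theorem given by P.~Pucci and J.~Serrin in \cite{puse} and a quoted local minimum result obtained by B.~Ricceri in \cite{R2}.'' Your proposal correctly reconstructs exactly this interplay---Ricceri's variational principle to produce a local minimizer $u_0$ of $J_\mu$ strictly inside $\Phi^{-1}(]-\infty,\varrho[)$, followed by the Pucci--Serrin mountain pass alternative when $u_0$ is not a strict global minimum---so it is both correct and in line with what the paper indicates.
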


The abstract resut given in Theorem~\ref{Pucci-Serrin+Ricceri} is an interplay between the celebrated three critical point theorem given by P. Pucci and J. Serrin in \cite{puse} and a quoted local minimum result obtained by B. Ricceri in  \cite{R2}. Some applications of Ricceri's variational principle are contained in \cite{MR,Ri1,R3}. We also cite \cite{k2} for related topics on the variational methods used in this paper. Classical notions can be found in~\cite{brezis}.

The $(\rm PS)$-condition is one of the main compactness assumption required on the energy functional when considering critical point theorem. In order to simplify its proof, in the sequel we will perform the following result, which is valid for the energy functional~$\mathcal J_\lambda$ given in \eqref{Fu2}.
\begin{proposition}\label{PScondition}
Let $f:D\times \RR\rightarrow \RR$ be a function such that $f(x,\cdot)$ is continuous for every $x\in W$ and $\alpha\in L^1(D)$. Moreover, fixed $\lambda>0,$ let $\mathcal J_\lambda$ be the energy functional
defined in \eqref{Fu2}.
If the sequence $(u_k)_k$ is bounded in $W_0^{1,2}(D)$ and
$$
\|\mathcal{J}_{\lambda}'(u_k)\|_{(W_0^{1,2}(D))^*}\to0\quad \mbox{as}\,\,k\rightarrow +\infty,
$$
then $(u_k)_k$ has a Cauchy subsequence in $W_0^{1,2}(D)$ and so $(u_k)_k$ has a convergent subsequence.
\end{proposition}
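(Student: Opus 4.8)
The plan is to run the standard variational compactness argument, whose only genuine input is the compact Sobolev embedding of Proposition~\ref{theorem 7}; in fact, since $D$ is a finite set the space $W_0^{1,2}(D)$ is finite dimensional, so bounded sequences are automatically precompact and the statement is almost immediate. Nevertheless I would write the proof so that the role of the embedding stays transparent. First, since $(u_k)_k$ is bounded in the Hilbert (hence reflexive) space $W_0^{1,2}(D)$, I would extract a subsequence, not relabeled, with $u_k\rightharpoonup u$ weakly in $W_0^{1,2}(D)$ for some $u\in W_0^{1,2}(D)$. By Proposition~\ref{theorem 7} the embedding $W_0^{1,2}(D)\hookrightarrow L^q(D)$ is compact for every $q\in[1,+\infty]$, so along a further subsequence $u_k\to u$ in $L^\infty(D)$; in particular $R:=\sup_k\|u_k\|_{L^\infty(D)}<+\infty$.

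Next I would test the G\^ateaux derivative of $\mathcal J_\lambda$ (computed right after~\eqref{Fu2}) against $v=u_k-u$. Since $\|\mathcal J_\lambda'(u_k)\|_{(W_0^{1,2}(D))^*}\to 0$ and $(u_k-u)_k$ is bounded in $W_0^{1,2}(D)$, one has $\langle\mathcal J_\lambda'(u_k),u_k-u\rangle\to 0$, and the explicit expression for $\mathcal J_\lambda'$ yields
$$\frac{1}{\lambda}\langle u_k,u_k-u\rangle=\langle\mathcal J_\lambda'(u_k),u_k-u\rangle+\frac{1}{\lambda}\int_D\alpha(x)\,u_k(x)\,(u_k(x)-u(x))\,d\mu+\int_D f(x,u_k(x))\,(u_k(x)-u(x))\,d\mu.$$
I would then bound the two integral terms. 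For the linear one, $\left|\int_D\alpha\,u_k\,(u_k-u)\,d\mu\right|\le\|u_k\|_{L^\infty(D)}\,\|u_k-u\|_{L^\infty(D)}\,\|\alpha\|_{L^1(D)}\to 0$. For the nonlinear one, since $f(x,\cdot)$ is continuous and $D$ is finite, $C_R:=\sup_k\max_{x\in D}|f(x,u_k(x))|<+\infty$, whence $\left|\int_D f(x,u_k)(u_k-u)\,d\mu\right|\le C_R\,\mu(D)\,\|u_k-u\|_{L^\infty(D)}\to 0$. Hence $\langle u_k,u_k-u\rangle\to 0$.

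Finally, from $u_k\rightharpoonup u$ in $W_0^{1,2}(D)$ we also get $\langle u,u_k-u\rangle\to 0$, and subtracting the two limits gives $\|u_k-u\|^2=\langle u_k-u,u_k-u\rangle\to 0$, i.e. $u_k\to u$ strongly in $W_0^{1,2}(D)$; in particular $(u_k)_k$ has a Cauchy subsequence, which is the claim. I do not expect any real obstacle here: the only delicate point is that no growth hypothesis on $f$ beyond continuity in the second variable is imposed, but this is harmless precisely because Proposition~\ref{theorem 7} embeds $W_0^{1,2}(D)$ into $L^\infty(D)$, so every bounded sequence is uniformly bounded and $f(\cdot,u_k(\cdot))$ stays uniformly bounded on the finite set $D$. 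Alternatively, one may simply invoke that a bounded sequence in the finite-dimensional normed space $W_0^{1,2}(D)$ is precompact, hence admits a convergent — a fortiori Cauchy — subsequence, which makes even the hypothesis on $\mathcal J_\lambda'(u_k)$ superfluous.
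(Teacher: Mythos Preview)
Your argument is correct. It differs from the paper's proof only in the bookkeeping of the final convergence step, not in substance. Both proofs rest on the same ingredients: the compact embedding of Proposition~\ref{theorem 7} (giving $L^\infty$-convergence along a subsequence) and the explicit form of $\mathcal J_\lambda'$. You first pass to a weak limit $u$, test $\mathcal J_\lambda'(u_k)$ against $u_k-u$, and deduce $\|u_k-u\|\to 0$ from $\langle u_k,u_k-u\rangle\to 0$ together with $\langle u,u_k-u\rangle\to 0$; this is the standard ``$S_+$-type'' scheme used for elliptic problems on Euclidean domains. The paper instead estimates $\|u_j-u_l\|=\sup_{\|v\|\le 1}|\langle u_j-u_l,v\rangle|$ directly by rewriting $\langle u_j-u_l,v\rangle$ through $\mathcal J_\lambda'(u_j)(v)-\mathcal J_\lambda'(u_l)(v)$ plus the $\alpha$- and $f$-remainders, showing the subsequence is Cauchy without naming a weak limit. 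Your route is arguably cleaner and more in line with the usual variational literature; the paper's route avoids invoking weak convergence explicitly but is otherwise equivalent. Your closing observation---that finite dimensionality of $W_0^{1,2}(D)$ makes bounded sequences automatically precompact, so neither the structure of $\mathcal J_\lambda'$ nor the hypothesis $\mathcal J_\lambda'(u_k)\to 0$ is actually needed---is correct and is the sharpest way to dispatch the claim in this setting.
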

\begin{proof}
Let $(u_k)_k$ be a bounded sequence in $W_0^{1,2}(D)$ and let $\lambda>0$ be fixed. By Proposition \ref{theorem 7} there exist a subsequence, which we still denote by $(u_k)_k$ and a function $u_\infty\in L^{\infty}(D)$ such that $u_k\rightarrow u_\infty$ in $L^{\infty}(D)$ as $k\rightarrow +\infty$. By \eqref{eq21}, since $\displaystyle|u(x)|\leqslant \frac{1}{\mu_0\sqrt{\lambda_1}} \|u\|$ for every $x\in D,$ it follows that
\begin{equation*}
\begin{split}
\|u_j-u_l\| & = \sup_{\|v\|\leq 1} |\langle u_j-u_l,v \rangle|\\
& = \sup_{\|v\|\leq 1} \Bigg| \lambda\mathcal{J}_{\lambda}'(u_j)(v)-\lambda\mathcal{J}_{\lambda}'(u_l)(v) + \\
&\qquad\qquad\int_D \alpha(x)(u_j(x)-u_l(x))v(x) d\mu - \lambda\int_D (f(x,u_j(x))-f(x,u_l(x)))v(x) d\mu \Bigg| \\
 &\leq \lambda\|\mathcal{J}_{\lambda}'(u_j)\|+\lambda\|\mathcal{J}_{\lambda}'(u_l)\|\\
& \hspace{5mm}+ \frac{\displaystyle\max_{x\in D} |u_j(x)-u_l(x)|}{\mu_0\sqrt{\lambda_1}}   \int_D |\alpha(x)| d\mu \\
& \hspace{5mm}+ \frac{\lambda}{\mu_0\sqrt{\lambda_1}}  \int_D |f(x,u_j(x))-f(x,u_l(x))| d\mu\rightarrow 0 \quad \mbox{as}\,\,j,l\rightarrow +\infty,
\end{split}
\end{equation*}
since $\mathcal{J}_{\lambda}'(u_k)\rightarrow 0$ as $k \rightarrow +\infty$ and by using the classical Lebesgue Dominated Convergence Theorem.
\end{proof}

Before proving Theorem~\ref{MolicaBisciPrincipal} it will be useful to define another norm on $W_0^{1,2}(D)$
as follows:
\begin{equation}\label{eqnorm}
\|u\|_{\alpha}:=\sqrt{\displaystyle \langle u,u \rangle -\int_D{\alpha(x)}|u(x)|^{2}d\mu},
\end{equation}
where $\alpha$ is the function satisfying the assumptions stated in Theorem~\ref{MolicaBisciPrincipal} and $\langle \cdot,\cdot \rangle$ is defined in \eqref{psc}.
It is easy to see that $\|\cdot\|_\alpha$ is a norm on $W_0^{1,2}(D)$ equivalent to the usual one given in \eqref{eq31}.

More precisely, if $\alpha$ satisfies condition~\eqref{alfa1} we have that
\begin{equation}\label{normaalfa1}
\begin{aligned}
\|u\|_{\alpha}^2=\displaystyle \langle u,u \rangle-\int_D{\alpha(x)}|u(x)|^{2}d\mu\geq \langle u,u \rangle=\|u\|^2,
\end{aligned}
\end{equation}
and, by \eqref{eq21}, we get
$$\begin{aligned}
\|u\|_{\alpha}^2 & =\displaystyle \langle u,u \rangle-\int_D{\alpha(x)}|u(x)|^{2}d\mu\\
& \leq \displaystyle \langle u,u \rangle-\frac{1}{\mu_0^2\lambda_1}\|u\|^2\int_D{\alpha(x)}d\mu\\
& = \Big(1+\frac{1}{\mu_0^2\lambda_1}\int_D{|\alpha(x)|}d\mu\Big)\|u\|^2\,.
\end{aligned}$$

On the other hand, if $\alpha$ 
satisfies condition~\eqref{alfa2} we have that
\begin{equation}\label{normaalfa2}
\begin{aligned}
\|u\|_{\alpha}^2 & =\displaystyle \langle u,u \rangle-\int_D{\alpha(x)}|u(x)|^{2}d\mu\\
& \geq \displaystyle \langle u,u \rangle-\int_D{|\alpha(x)|}|u(x)|^{2}d\mu\\
& \geq \displaystyle \langle u,u \rangle-\frac{1}{\mu_0^2\lambda_1}\|u\|^2\int_D|\alpha(x)|d\mu\\
& = \Big(1-\frac{1}{\mu_0^2\lambda_1}\int_D{|\alpha(x)|}d\mu\Big)\|u\|^2
\end{aligned}
\end{equation}
and
$$\begin{aligned}
\|u\|_{\alpha}^2 & =\displaystyle \langle u,u \rangle-\int_D{\alpha(x)}|u(x)|^{2}d\mu\\
& \leq \displaystyle \langle u,u \rangle-\int_D{|\alpha(x)|}|u(x)|^{2}d\mu\\
& \leq \displaystyle \langle u,u \rangle+\frac{1}{\mu_0^2\lambda_1}\|u\|^2\int_D{|\alpha(x)|}d\mu\\
& \leq 2\|u\|^2\,,
\end{aligned}$$
thanks to \eqref{alfa2}.

\section{Existence and multiplicity results}\label{EMresult}

\noindent This section is devoted to proving our main results.\par
\smallskip
{\bf Proof of Theorem~\ref{MolicaBisciPrincipal}}  The idea of the proof consists of applying Theorem~\ref{Pucci-Serrin+Ricceri} to the functional
$$\mathcal J_\lambda(u)=\frac{1}{2\lambda}\Phi(u)-\Psi(u)\,,$$
where
$$
\Phi(u):=\|u\|_{\alpha}^2,
$$
as well as
$$
\Psi(u):=\displaystyle\int_D F(x,u(x))d\mu,
$$
for any $u\in W_0^{1,2}(D)$. Note that here we perform Theorem~\ref{Pucci-Serrin+Ricceri} by taking the parameter ${\displaystyle\mu=\frac{1}{2\lambda}}$.

First, let us consider the regularity assumptions required on $\Phi$ and $\Psi$. It is easily shown that the functional $\Phi$ is sequentially weakly lower semicontinuous and coercive in $W_0^{1,2}(D)$.

Now, let us prove that the functional $\Psi$ is sequentially weakly continuous in $W_0^{1,2}(D)$. For this purpose, let $(u_k)_k$ be a sequence in the Sobolev space $W_0^{1,2}(D)$ such that
$$u_k \to u_\infty\quad \mbox{weakly in}\,\, W_0^{1,2}(D)$$
as $k\to +\infty$, for some $u_\infty\in W_0^{1,2}(D)$. Consequently, Proposition \ref{theorem 7} yields
$$u_k \to u_\infty\quad \mbox{in}\,\, L^{\infty}(D)\,,$$
that is
\begin{equation}\label{normalinfty}
\|u_k - u\|_\infty\to 0
\end{equation}
as $k\to +\infty$.
Now, by \eqref{normalinfty} there exists a real constant $c>0$ such that
\begin{equation}\label{norma<K}
\|u_k\|_\infty\leq c\quad \mbox{and} \quad \|u_\infty\|_\infty\leq c\,\,\, \mbox{for any}\,\, k\in \NN.
\end{equation}

Then  by \eqref{norma<K} and keeping in mind that $f(x,\cdot)$ is continuous in $\RR$ for every $x\in W$, one has
\begin{equation}\label{psiweakly}
\begin{aligned}
\Big|\Psi(u_k)-\Psi(u_\infty)\Big| & =\Big|\int_D F(x, u_k(x))d\mu- \int_D F(x, u_\infty(x))d\mu\Big|\\
& \leq  \int_D \Big|F(x, u_k(x))- F(x, u_\infty(x))\Big|d\mu\\
& = \int_D \Big|\int_{u_k(x)}^{u_\infty(x)}f(x,t)dt\Big|d\mu\\
& \leq \int_D \Big|\int_{u_k(x)}^{u_\infty(x)}|f(x,t)|dt\Big|d\mu\\
& \leq \int_D \Big|\int_{u_k(x)}^{u_\infty(x)}{\displaystyle \max_{|t|\leq c}|f(x,t)|}dt\Big|d\mu\\
& = \int_D {\displaystyle \max_{|t|\leq c}
|f(x,t)|}\,\,|u_k(x) - u_\infty(x)| d\mu\\
& \leq {\displaystyle\max_{{\footnotesize\begin{array}{c}
x\in D\\
|t|\leq c
\end{array}}}|f(x,t)|}\,\,\|u_k - u_\infty\|_\infty\mu(D).
\end{aligned}
\end{equation}
Hence, by \eqref{normalinfty} and \eqref{psiweakly} we obtain that
$$\Big|\Psi(u_k)-\Psi(u_\infty)\Big|\to 0$$
as $k\to +\infty$, so that the functional $\Psi$ is sequentially weakly continuous in $W_0^{1,2}(D)$ as claimed. Now, we observe that
\begin{equation}\label{junboundedX}\mbox{the functional $\mathcal J_\lambda$ is unbounded from below in $W_0^{1,2}(D)$.}
\end{equation}
Indeed, assumption~\eqref{f2} implies that there exist two positive constants $b_1$ and $b_2$ such that
\begin{equation}\label{inequal}
F(x,t)\geq b_1|t|^{\beta}-b_2\quad \mbox{for any}\,\, x\in D\, \mbox{and}\,\, t\in \RR.
\end{equation}
To prove \eqref{inequal} let $r_0>0$ be as in \eqref{mu0}: then, for every $x\in D$ and for
any $t\in \RR$ with $|t|\geq r_0>0$
$$\frac{t\,f(x,t)}{F(x,t)}\geq \beta.$$
Suppose $t>r_0$. Dividing by $t$ and integrating both terms in $[r_0,t]$
we obtain
$$
F(x,t)\geq \frac{F(x,r_0)}{r^\beta_0}\,|t|^\beta.
$$
Using the same arguments it is easy to prove that if $t<-r_0$ then it results
$$
F(x,t)\geq \frac{F(x,-r_0)}{r^\beta_0}\,|t|^\beta,
$$
so that for any $t\in \RR$ with $|t|\geq r_0$ we get
\begin{equation}\label{Flog}
F(x,t)\geq m(x)\,|t|^\beta,
\end{equation}
where $$m(x):=\frac{\min\{F(x,r_0),\, F(x, -r_0)\}}{r_0^{\beta}}.$$ Since the function $t\mapsto F(\cdot, t)$ is continuous in $\RR$, by
the Weierstrass Theorem, it is bounded for any $t\in \RR$ such that
$|t|\leq r_0$, say
\begin{equation}\label{maxM}
|F(x,t)|\leq \widetilde M(x) \quad \mbox{in}\,\,\, \{|t|\leq r_0\},
\end{equation}
where $$\widetilde M(x):=\max\{|F(x,t)| : |t|\leq r_0\}.$$ Formula~\eqref{inequal}
follows from \eqref{Flog} and \eqref{maxM} by taking

$$b_1:=\min_{x\in D}m(x) \quad \mbox{ and } \quad b_2:=\max_{x\in D}(\widetilde M(x)+m(x)r^\beta_0).$$

\par
Thus, by \eqref{inequal} for any $u\in W_0^{1,2}(D)$ one has
\begin{equation}\label{inequal2}
\int_DF(x,u(x))d\mu\geq b_1\int_D|u(x)|^{\beta}d\mu-b_2\mu(D)\,.
\end{equation}
Let $u_0\in W_0^{1,2}(D)$ with ${\displaystyle \int_D|u_0(x)|^{\beta}d\mu>0}$. Then  by \eqref{inequal2} we have that
$$\begin{aligned}
\mathcal J_\lambda(t u_0) &= \frac{t^2}{2\lambda}\|u_0\|_\alpha^2-\int_D F(x,tu_0(x))d\mu\\
&\leq \frac{t^2}{2\lambda}\|u_0\|_\alpha^2-b_1|t|^{\beta}\int_D|u_0(x)|^{\beta}d\mu+b_2\mu(D)\\
&\rightarrow -\infty,
\end{aligned}$$
as $t\rightarrow +\infty$, since $\beta>2$ by assumption~\eqref{f2} and ${\displaystyle \int_D|u_0(x)|^{\beta}d\mu>0}$\,. The proof of \eqref{junboundedX} is completed.

Now, it remains to see that the functional $\mathcal J_\lambda$ satisfies $(\rm PS)$-condition. To this goal, it is sufficient to employ Proposition~\ref{PScondition}.

More precisely, suppose that there exists a real constant $c>0$ such that $|\mathcal J_\lambda(u_k)|\leq c$ for every $k\in \NN$ and $\mathcal J_\lambda'(u_k)\rightarrow 0$ as $k\rightarrow +\infty$. Now, by \eqref{Fu2} it follows that
\begin{equation}\label{eq3.11}
\begin{split}
c+\beta ^{-1} \|u_k\| & \geq \lambda\mathcal J_\lambda(u_k)-\lambda\beta ^{-1}\mathcal J_\lambda'(u_k) (u_k)\\
& = \left( \frac{1}{2}-\frac{1}{\beta}\right) \|u_k\|^2 + \lambda\beta ^{-1} \int_D (f(x,u_k(x))u_k(x)-\beta F(x,u_k(x))) d\mu \\
& = \left( \frac{1}{2}-\frac{1}{\beta}\right) \|u_k\|^2\\
& \hspace{5mm}+ \lambda\beta ^{-1} \int_{\{x\in D :|u_k(x)|<r_0\}} (f(x,u_k(x))u_k(x)-\beta F(x,u_k(x))) d\mu \\
& \hspace{5mm}+ \lambda\beta ^{-1}\int_{\{x\in D : |u_k(x)|\geq r_0\}} (f(x,u_k(x))u_k(x)-\beta F(x,u_k(x))) d\mu,
\end{split}
\end{equation}
 for every $k\in \NN$ sufficiently large.\par
By \eqref{f2}, the third term in (\ref{eq3.11}) is non-negative while the second term is bounded by a positive constant independent of $k$. Since $\beta> 2$, (\ref{eq3.11}) implies that $(u_k)_k$ is bounded in $W^{1,2}_0(D)$. Therefore, by Proposition \ref{PScondition}, the energy functional $\mathcal J_\lambda$ satisfies the (PS) compactness condition.\par
Moreover, let $\varrho>0$ and
$$\chi(\varrho):=\displaystyle\inf_{u\in \mathbb{B}_{\varrho}}\frac{\displaystyle\sup_{v\in \mathbb{B}_{\varrho}}\Psi(v) -\Psi(u)}{\varrho-\|u\|_{\alpha}^2}\,,$$
where
$$\mathbb{B}_{\varrho}=\Big\{v\in W_0^{1,2}(D) : \|v\|_\alpha<\sqrt \varrho\Big\}\,.$$

The definition of $\chi$ yields that for every $u\in \mathbb{B}_{\varrho}$
$$\chi(\varrho) \leq \frac{\displaystyle\sup_{v\in \mathbb{B}_{\varrho}}\Psi(v) -\Psi(u)}{\varrho-\|u\|_{\alpha}^2}$$
thus, being $0\in \mathbb{B}_\varrho$, we obtain that
\begin{equation}\label{Funzionale1}
\begin{aligned}
\chi(\varrho)
& \leq \frac 1 \varrho\displaystyle\sup_{v\in \mathbb{B}_{\varrho}}\Psi(v)\\
& \leq \frac 1 \varrho \displaystyle\sup_{v\in \overline{\mathbb{B}}_\varrho}\left|\int_D F(x,v(x))d\mu\right|\\
& \leq \frac 1 \varrho \displaystyle\sup_{v\in \overline{\mathbb{B}}_\varrho}\int_D \left|F(x,v(x))\right| d\mu.
\end{aligned}
\end{equation}

Now, assume that the function $\alpha$ satisfies assumption~\eqref{alfa1}. Then  if $v\in \overline{\mathbb{B}}_\varrho$, by \eqref{eq21} and \eqref{normaalfa1} we get that
\begin{equation}\label{valfa1}
|v(x)|\leq \frac{1}{\mu_0\sqrt{\lambda_1}}\|v\|\leq \frac{1}{\mu_0\sqrt{\lambda_1}}\|v\|_\alpha\leq \frac{\sqrt{\varrho} }{\mu_0\sqrt{\lambda_1}}\quad \mbox{for any}\,\, x\in D,
\end{equation}
which, together with the continuity of $F$ and the finiteness of $D$, gives for any $x\in D$
\begin{equation}\label{Funzionale2F}
\left|F(x,v(x))\right|\leq \displaystyle\max_{{\footnotesize\begin{array}{c}
x\in D\\
|s|\leq \displaystyle\frac{\sqrt{\varrho} }{\mu_0\sqrt{\lambda_1}}
\end{array}}}\left|\int_0^{s}f(x,t)dt\right|.
\end{equation}
Therefore, inequality~\eqref{Funzionale2F} yields
\begin{equation}\label{Funzionale2F2}
\int_D \left|F(x,v(x))\right|d\mu\leq \mu(D)\displaystyle\max_{{\footnotesize\begin{array}{c}
x\in D\\
|s|\leq \displaystyle\frac{\sqrt{\varrho} }{\mu_0\sqrt{\lambda_1}}
\end{array}}}\left|\int_0^{s}f(x,t)dt\right|
\end{equation}
for any $v\in \overline{\mathbb{B}}_{\varrho}$.

By \eqref{Funzionale1} and \eqref{Funzionale2F2} we have that
$$\chi(\varrho)\leq \frac 1 \varrho \displaystyle\max_{{\footnotesize\begin{array}{c}
x\in D\\
|s|\leq \displaystyle\frac{\sqrt{\varrho} }{\mu_0\sqrt{\lambda_1}}
\end{array}}}\left|\int_0^{s}f(x,t)dt\right|<\frac{1}{2\lambda}\,,$$
provided $\lambda$ satisfies condition \eqref{lambda}.

On the other hand, if the function $\alpha$ satisfies assumption~\eqref{alfa2}, we can argue in the same way, just replacing \eqref{valfa1} with the following inequality
\begin{equation}\label{valfa2}
\begin{aligned}
|v(x)| & \leq \frac{1}{\mu_0\sqrt{\lambda_1}}\|v\|\\
& \leq {\displaystyle \frac{1}{\mu_0\sqrt{\lambda_1}\sqrt{1-\displaystyle\frac{1}{\mu_0^2\lambda_1}{\displaystyle\int_D{|\alpha(x)|}d\mu}}}}\|v\|_\alpha\\
& \leq {\displaystyle \frac{1}{\mu_0\sqrt{\lambda_1}\sqrt{1-\displaystyle\frac{1}{\mu_0^2\lambda_1}{\displaystyle\int_D{|\alpha(x)|}d\mu}}}}\sqrt{\varrho}
\end{aligned}
\end{equation}
for any $x\in D$, and thanks to \eqref{normaalfa2}.

In both cases, owing to Theorem~\ref{Pucci-Serrin+Ricceri} and considering \eqref{f0} and \eqref{junboundedX}, we conclude that problem~\eqref{Np0}
admits at least two non-trivial weak solutions one of which lies in $\mathbb{B}_\varrho$.
The proof of Theorem~\ref{MolicaBisciPrincipal} is finally complete on account of Proposition~\ref{strong}.\hfill $\Box$

\medskip

\begin{remark}\label{notaMorrey}
{\rm First, we notice that condition~\eqref{eq21} plays a crucial role in the proof of Theorem~\ref{MolicaBisciPrincipal}, whereas the Sobolev embedding theorems are employed in the classical case of bounded domains; see, among others, the papers \cite{ar, rabinowitz, struwe}. The proof of Theorem~\ref{MolicaBisciPrincipal} relies on the same arguments as used in \cite[Theorem 1]{MRS}.
}
\end{remark}

\begin{remark}\label{corollarioprova2}
\rm{Note that the trivial function
is a weak solution of problem~\eqref{Np0} if and only if $f(\cdot, 0)= 0$. Hence, condition~\eqref{f0} assures that all the solutions of problem~\eqref{Np0}, if any, are non-trivial. In the case when $f(\cdot,0) = 0$, in order to get the existence of multiple solutions for \eqref{Np0} some extra assumptions on the nonlinear term $f$ are necessary.}
\end{remark}
{\bf Proof of Theorem~\ref{MolicaBisciSpecial}} The conclusion is achieved by using Theorem~\ref{MolicaBisciPrincipal}. Indeed,
as it is easily seen, condition~\eqref{f1} yields
\begin{equation}\label{Funzionale2F3}
\displaystyle\max_{(x,s)\in D\times[-M_0,M_0]}\left|\int_0^{s}f(x,t)dt\right|\leq \mu_0^2\frac{M_0^2}{(\sigma+1)}\frac{\lambda_1}{2}.
\end{equation}

\noindent Thus, if we set
\begin{equation*}
\theta:=\left\{\begin{array}{ll}
\displaystyle \frac{\mu_0^2M_0^2\lambda_1}{2\displaystyle\max_{(x,s)\in D\times[-M_0,M_0]}\left|\int_0^{s}f(x,t)dt\right|} \ & \ \mbox{if} \displaystyle\max_{(x,s)\in D\times[-M_0,M_0]}\left|\int_0^{s}f(x,t)dt\right|>0\\
\\
+\infty \ & \ \mbox{otherwise},
\end{array}\right.
\end{equation*}
by the fact that $\sigma>0$, one has
\begin{equation}\label{FUno}
1<\sigma+1\leq \theta\leq \lambda^*,
\end{equation}
since \eqref{Funzionale2F3} holds and
$$
\lambda^*:=\sup_{\varrho>0}\frac{\varrho}{\displaystyle\max_{{\footnotesize\begin{array}{c}
x\in D\\
|s|\leq \kappa\sqrt{\varrho}
\end{array}}}\left|\int_0^{s}f(x,t)dt\right|}= \frac{1}{\kappa^2}\sup_{z>0}\frac{z^2}{\displaystyle\max_{{\footnotesize\begin{array}{c}
x\in D\\
|s|\leq z
\end{array}}}\left|\int_0^{s}f(x,t)dt\right|},
$$
on account of the definition given in \eqref{struttela}.\par
\noindent Then  thanks to \eqref{FUno}, Theorem~\ref{MolicaBisciPrincipal} ensures that for $\lambda=1$ problem~\eqref{Np}
admits at least two non-trivial weak solutions one of which lies in the open ball $\mathbb{B}_{\mu_0^2M_0^2\lambda_1}$. Finally, Proposition~\ref{strong} ensures that every weak solution of problem~\eqref{Np} is also classical and this concludes the proof of Theorem~\ref{MolicaBisciSpecial}.\hfill $\Box$\par

\begin{remark}
\rm{We notice that a condition similar to \eqref{f1} has been used previously in the literature by K.J. Falconer and J. Hu by studying elliptic problems on the gasket; see \cite[Theorem 3.5]{Falconer}}.
\end{remark}

\medskip
{\bf Proof of Theorem~\ref{MolicaBisciSpecial2}} \rm{In order to prove the existence of a non-negative
solution of problem~\eqref{Np} it is enough to introduce the
functions
$${\displaystyle F_+(x,t):=\int_0^t  f_+(x,\tau)d\tau},$$
with
$$\begin{array}{ccc}
f_+(x,t)=\begin{cases}
f(x,t) & \mbox{if} \,\,\, t\geq 0\\
0 & \mbox{if} \,\,\, t<0
\end{cases},
\end{array}$$
for every $x\in D$.\par
\noindent Note that $f_+$ satisfies condition~\eqref{f1}, while assumption~\eqref{f2} is satisfied  by $f_+$ and
$F_+$ for every $x\in D$ and for any $t>r_0$.

\noindent Thus, let $\mathcal J_+: W_0^{1,2}(D) \to \RR$ be the functional defined as
follows
\begin{equation}
\begin{aligned}
\mathcal J_+(u)&:=\frac{1}{2}\|u\|^2-\frac{1}{2}\int_D {\alpha(x)}|u(x)|^{2}\,d\mu
-\int_D F_+(x, u(x))d\mu\\
&=\frac{1}{2}\|u\|^2-\frac{1}{2}\int_D {\alpha(x)}|u(x)|^{2}\,d\mu
-\int_D F(x, u^+(x))d\mu,
\end{aligned}
\end{equation}
where $u^+(x):=\max\{0,u(x)\}$ for every $x\in D$.\par
\noindent It is easy to see that the energy functional $\mathcal J_+$ is well
defined and Fr\'echet differentiable in $u\in W_0^{1,2}(D)$. More precisely
\begin{equation}\label{test+}
\begin{aligned}
\langle \mathcal{J}'_{+}(u), v\rangle = {\langle u,v \rangle}-\int_D\alpha(x)u(x)v(x)d\mu
-\int_D f_+(x,u(x))v(x)d\mu,
\end{aligned}
\end{equation}
for any
$v\in W_0^{1,2}(D)$.\par
 \noindent Now, by \eqref{f1l} it follows that
\begin{equation}\label{f1N}
\begin{aligned}
& \mbox{there are positive constants}\,\,\, M_0\,\, \mbox{and}\,\, \sigma\,\, \mbox{such that}\\
& \,\,\,\,\,\,\,\displaystyle\max_{(x,s)\in \mathop D\limits^ \circ\times[-M_0,M_0]}f_+(x,s)\leq \mu_0^2\frac{M_0}{(\sigma+1)}\frac{\lambda_1}{2}.
\end{aligned}
\end{equation}
Since \eqref{f2new} holds, as in the proof of Theorem~\ref{MolicaBisciSpecial}, there exists $u_\infty\in W^{1,2}_0(D)\setminus\{0\}$ that classically solves the following Dirichlet problem
 \begin{equation}\label{Npps}
\left\{
\begin{array}{l}
-\Delta_{\mu} u(x)=\alpha(x)u(x)+ f_+(x,u(x))\quad x \in \mathop D\limits^ \circ\\
\medskip
\,\, u|_{\partial D}=0.\\
\end{array}
\right.
\end{equation}

\noindent We claim that $u_\infty$ is non-negative in $D$. Indeed, as usual, let $u^-_\infty:=\max\{-u_{\infty}, 0\}$ be the negative part
of $u_\infty$.
Since $$\Gamma(u^-_\infty,u_\infty)=\Gamma(u^-_\infty)+\Gamma(u^-_\infty,u^+_\infty)\geq |\nabla u^-_\infty|^2,$$ and $\alpha\in L^1(D)$ satisfy \eqref{alfa1}, on account of \eqref{parti} it follows that

\begin{equation*}
\begin{aligned}
\int_{D} |\nabla u^-_\infty|^2(x) d\mu &\leq \int_D \Gamma(u^-_\infty)(x)d\mu+\int_D\Gamma(u^-_\infty,u^+_\infty)(x)d\mu\\
&=\int_{D} \Gamma(u^-_\infty,u_\infty)(x) d\mu = - \int_{D} u^-_\infty(x) \Delta_\mu u_\infty(x) d\mu\\
& = \int_{\Omega} u^-_\infty(x) f_+(x,u^+_\infty(x)) dx=0.
\end{aligned}
\end{equation*}
This implies that $u^-_\infty \equiv 0$ and thus $u_\infty\geq 0$ in $D$. Since $f^+=f$ in $\RR^+_0:=[0,+\infty[$ the function $u_\infty\in W^{1,2}_0(D)\setminus\{0\}$ classically solves \eqref{Np} as claimed. \hfill$\Box$
}
\smallskip

\begin{remark}
We notice that the functional $\mathcal J_+$ defined along the proof of Theorem~\ref{MolicaBisciSpecial2} satisfies the compactness (PS) condition, so that classical variational arguments can be applied by studying problem \eqref{Npps}. To this goal let us observe that, arguing as in the proof of Theorem~\ref{MolicaBisciPrincipal}, assumption~\eqref{f2new} implies that there exist two positive constants $b_1$ and $b_2$ such that
\begin{equation}\label{inequalNew}
F(x,t)\geq b_1t^{\beta}-b_2\quad \mbox{for any}\,\, t\geq r_0\, \mbox{and every}\,\, x\in D.
\end{equation}
Let $c\in \RR$ and let $(u_k)_k$ be a
sequence in $W^{1,2}_0(D)$ such that
\begin{equation}\label{Jc0}
\mathcal J_+(u_k)\to c,
\end{equation}
and
\begin{equation}\label{J'00}
\sup\Big\{ \big|\langle\,\mathcal J'_+(u_k),v\,\rangle \big|\,: \;
v\in
W^{1,2}_0(D)\,, \|v\|=1\Big\}\to 0,
\end{equation}
as $k\to +\infty$. Without loss of generality, we only consider the case of a (definitively) non-trivial sequence.
For any $k\in \NN$ by \eqref{J'00} and \eqref{Jc0} it easily follows
that there exists $\kappa>0$ such that
\begin{equation}\label{j'limitato0}
\Big|\langle \mathcal J'_+(u_k), \frac{u_k}{\|u_k\|}\rangle\Big| \leq \kappa\,,
\end{equation}
and
\begin{equation}\label{jlimitato0}
|\mathcal J_+(u_k)|\leq \kappa,
\end{equation}
for every $k\in \NN$.\par
\noindent Moreover, by \eqref{f02} it follows
that
\begin{equation}\label{uj<r}
\begin{aligned} & \Big|\int_{\{x\in D:\,0\leq u_k^+(x)\leq
\,r_0\}}\Big(F(x, u_k^+(x))-\frac 1 \beta\, f(x,
u_k^+(x))\, u_k^+(x)\,\Big)\,dx\Big|\\
& \qquad \qquad \qquad \leq \sum_{x\in \{x\in D:\,0\leq u_k^+(x)\leq
\,r_0\}} \mu (x)\Big(F(x, u_k^+(x))-\frac 1 \beta\, f(x,
u_k^+(x))\, u_k^+(x)\,\Big)\\
&\qquad \qquad \qquad\leq
\sum_{x\in \{x\in D:\,0\leq u_k^+(x)\leq
\,r_0\}} \mu (x)\max_{(x,t)\in D\times [0,r_0]}\Big(F(x, t)-\frac 1 \beta\, f(x,t)\, t\,\Big)\\
&\qquad \qquad \qquad\leq \mu(D)\max_{(x,t)\in D\times [0,r_0]}\Big(F(x, t)-\frac 1 \beta\, f(x,t)\, t\,\Big)
<+\infty.
\end{aligned}
\end{equation}

\noindent Also, thanks to \eqref{mu0} and \eqref{uj<r} we get
\begin{equation}\label{jj'0}
\begin{aligned}
& \mathcal J_+(u_k)-\frac 1 \beta \langle \mathcal J'_+(u_k), u_k\rangle\\
& \qquad \qquad = \left(\frac 1 2 -\frac 1 \beta\right)\|u_k\|^2-\frac 1 \beta \int_D \Big(\beta F_+(x, u_k(x))-f_+(x, u_k(x)) \,u_k(x)\Big)\,dx\\
& \qquad \qquad \geq \left(\frac 1 2 -\frac 1 \beta\right)\|u_k\|^2\\ & \qquad \qquad \qquad \qquad -\int_{\{x\in D:\,0\leq u_k(x)\leq r_0\}}\Big(F(x, u_k^+(x))-\frac 1 \beta\, f(x, u_k^+(x))\,u_k^+(x)\Big)\,dx\\
& \qquad \qquad \geq \left(\frac 1 2 -\frac 1 \beta\right)\|u_k\|^2-\mu(D)\max_{(x,t)\in D\times [0,r_0]}\Big(F(x, t)-\frac 1 \beta\, f(x,t)\, t\,\Big).
\end{aligned}
\end{equation}
As a consequence of \eqref{j'limitato0} and  \eqref{jlimitato0} we also have
$$\mathcal J_+(u_k)-\frac 1 \beta \langle \mathcal J'_+(u_k), u_k\rangle\leq \kappa \left(1+ \|u_k\|\right)$$
so that, by \eqref{jj'0} for any $k\in \NN$
$$\|u_k\|^2 \leq \kappa_*\left(1+\|u_k\|\right)$$
for a suitable constant $\kappa_*>0$.
The above inequality immediately yields that the sequence $(u_k)_k$ is bounded in $W^{1,2}_0(D)$. Therefore, by Proposition \ref{PScondition}, the energy functional $\mathcal J_+$ satisfies the (PS) condition as claimed.
\end{remark}

{\bf Proof of Theorem \ref{MolicaBisciSpecial2C}} Since $p>2$, one clearly has
$$
\limsup_{t\rightarrow 0^+}\frac{|t|^{p-2}t}{t}=0.
$$
Moreover, it easily seen that condition \eqref{f2new} is also satisfied.
Consequently, arguing as in the proof of Theorem \ref{MolicaBisciSpecial2}, there exists a function $u_\infty\in W_0^{1,2}(D)\setminus\{0\}$ such that
\begin{equation}\label{1.1}
-\Delta_\mu u_\infty(x) - \gamma u_\infty(x)=(u^+_\infty(x))^{p-1} \qquad {\rm in} \quad \mathop D\limits^ \circ.
\end{equation}

\noindent Let us prove that $u_\infty\geq 0$ in $D$. To this aim, let $u^-_\infty(x) := \min \{u_\infty(x),0\}$ for every $x\in D$. Since $u^-_\infty(x) u^+_\infty(x) = u^-_\infty(x)(u^+_\infty(x))^{p-1} = 0$ for every $x\in D$, we have
\[
- \int_{D} u^-_\infty(x) \Delta_\mu u_\infty(x) d\mu - \gamma \int_{D} (u^-_\infty(x))^2 d\mu = 0 \,.
\]
Since $u_\infty=u^+_\infty + u^-_\infty$, the above equation leads to
\begin{equation}\label{1.2}
\begin{split}
\frac{\gamma}{\lambda_1 } \int_{D} |\nabla u^-_\infty|^2(x) d\mu &\geq \gamma \int_{D} (u^-_\infty(x))^2 d\mu\\
& = \int_{D} |\nabla u^-_\infty|^2(x) d\mu - \int_{D} u^-_\infty(x) \Delta_\mu u^+_\infty(x) d\mu.
\end{split}
\end{equation}
Note that
\begin{equation}\label{1.3}
\begin{split}
-\int_{D} u^-_\infty(x) \Delta_\mu u^+_\infty(x) d\mu & = -\sum_{x\in \mathop D\limits^ \circ} u^-_\infty(x) \sum_{y\sim x} w(x,y) (u^+_\infty(y)-u^+_\infty(x))\\
& = -\sum_{x\in \mathop D\limits^ \circ} \sum_{y\sim x} w(x,y) u^-_\infty(x) u^+_\infty(y) \geq 0 \,.
\end{split}
\end{equation}
Inserting (\ref{1.3}) into (\ref{1.2}) and recalling that $\gamma < \lambda_1$, we obtain $\displaystyle\int_{D} |\nabla u^-_\infty|^2(x) d\mu = 0$, which implies that $u^-_\infty \equiv 0$ in $D$. Whence $u_\infty\geq 0$ in $D$ and (\ref{1.1}) becomes
\begin{equation}\label{1.4}
\left\{
\begin{array}{l}
-\Delta_{\mu} u_\infty(x)-\gamma u_\infty(x)=u_\infty(x)^{p-1}\quad x \in \mathop D\limits^ \circ\\
\medskip
\,\,u_\infty(x)\geq 0 \quad x\in \mathop D\limits^ \circ\\
\medskip
\,\, u_\infty|_{\partial D}=0.
\end{array}
\right.
\end{equation}
\noindent Finally, suppose that
$$u_\infty(x_0)=0=\displaystyle\min_{x\in D} u_\infty(x)$$ for some $x_0\in \mathop D\limits^ \circ$. If $y\in D$ is adjacent to $x_0$ then $\Delta_\mu u_\infty(x_0)=0$ by \eqref{1.4}. The definition of the $\mu$-Laplacian immediately yields $u_\infty(y)=0$. Therefore we conclude
that $u_\infty\equiv 0$ in $D$, which is absurd. Thus $u_\infty>0$ in $\mathop D\limits^ \circ$ as claimed.
The proof is now complete.\hfill $\Box$

\medskip
\begin{remark}\label{RemFinale}
\rm{We notice that condition \eqref{NpXXss} is a special case of \eqref{f1}.
Consequently, it is easily seen that Theorems \ref{MolicaBisciSpecial} and \ref{MolicaBisciSpecial2} immediately yield the conclusions of Corollary \ref{MolicaBisciSpecialSpecial} given in Introduction.}
\end{remark}

\section{Yamabe-type problems on graphs}\label{YG}

Let $m\in \NN$ and $p\in \RR$ with $p>1.$ Denote by $W^{m,p}_0(D)$ be the completion of the space
$$
C_0^m(D):=\{u:D\rightarrow\RR: |\nabla^{j} u|=0\mbox{ on } \partial D, \mbox{ for every}\, j=0,...,m-1\}
$$
with respect to the Sobolev norm
\begin{equation}\label{nu0G}
 \|u\| := \sum_{k=0}^{m}\||\nabla^{k} u|\|_{L^p(D)},
 \end{equation}
where, for every $x\in D$
\begin{equation*}\label{problem1G}
| \nabla^k u|(x) :=\begin{cases}
\displaystyle|\nabla \Delta^{(m-1)/2}u|(x) &\mbox{if } k\in 2\NN+1\\\\
\,\displaystyle |\Delta^{m/2}u|(x) &\mbox{if } k\in 2\NN,
  \end{cases}
\end{equation*}
with $k=0,...,m,$
and
\begin{equation*}\label{problem1G2}
\Delta_\mu^{\ell} u(x) := \begin{cases}
\displaystyle \,\,u(x) &\mbox{if } \ell=0\\\\
\,\displaystyle \frac{1}{\mu (x)} \sum_{y\sim x} w(x,y) (\Delta_\mu^{\ell-1} u(y)-\Delta_\mu^{\ell-1}u(x)) &\mbox{if } \ell>0,
  \end{cases}
\end{equation*}
for every $x\in D$. By \cite[Theorem 27]{GLY2}
the space $(W^{m,p}_0(D),\|u\|_{W^{m,p}_0(D)})$ is a finite dimensional Banach space,
where
$$
\|u\|_{W^{m,p}_0(D)}:=\||\nabla^mu|\|_{L^p(D)}
$$
is a norm on $W^{m,p}_0(D)$ equivalent to $\|\cdot\|$ given in \eqref{nu0G}.\par
With the above notations, a general version of Theorem~\ref{MolicaBisciPrincipal} proved in Section \ref{SezA} can be achieved for the following Yamabe-type problem
\begin{equation}\label{NpXXgeneral}
\left\{
\begin{array}{l}
\mathcal{L}_{m,p} u(x)=\lambda f(x,u(x))\quad x \in \mathop D\limits^ \circ\\
\smallskip
\smallskip
\,\, |\nabla^{j} u|=0\,\,\ \mbox{ on } \partial D, \mbox{ with }\, j=0,...,m-1,\\
\end{array}
\right.
\end{equation}
\noindent where $\lambda$ is a real positive parameter, $f:D\times\RR\rightarrow \RR$ is a suitable continuous non-linear term and $\mathcal{L}_{m,p}:W^{m,p}_0(D)\rightarrow L^p(D)$ denotes the $(m,p)$-Laplacian operator defined in the distributional sense for every $u\in W^{m,p}_0(D)$ as
\begin{equation}\label{problem1Gigio}
\mathcal{L}_{m,p}  u :=\begin{cases}
\displaystyle\int_{D}|\nabla^mu|^{p-2}(x)\Gamma\left(\Delta^{(m-1)/2}u,\Delta^{(m-1)/2}v \right)(x)d\mu&\mbox{if } m\in 2\NN+1\\\\
\,\displaystyle \int_{D}|\nabla^mu|^{p-2}(x)\Delta^{m/2}u(x)\cdot \Delta^{m/2}v(x)d\mu &\mbox{if } m\in 2\NN,
  \end{cases}
\end{equation}
for any $v\in W^{m,p}_0(D)$.\par
The $(m,p)$-Laplacian $\mathcal{L}_{m,p}$ can be explicitly computed at any point of $x\in D$. In particular,
$\mathcal{L}_{m,2}$ is the poly-Laplacian operator $(-\Delta_\mu)^m$ that, for $p=2$, reduces to the $\mu$-Laplacian defined in \eqref{probl1}. We emphasize that $p$-Laplacian equations on graphs also in connection with geometrical analysis problems have been considered in the literature; see, among others the papers \cite{Ge1,Ge2,GeHJ,GeJ1} as well as \cite{GLY1,ZC,ZL1,ZL2}.
Moreover, existence and convergence of solutions for nonlinear biharmonic equations on graphs
is proved in \cite{HSZ} as well as
the 1-Yamabe equation on the graph-theoretical setting has been investigated in \cite{GeJ3}.
\par
Thanks to \eqref{problem1Gigio} the solutions of \eqref{NpXXgeneral} are the critical points of the energy functional $\mathcal J:W^{m,p}_0(D)\rightarrow\RR$ defined by
\begin{equation}\label{Fu2General}
\begin{aligned}
\mathcal J_\lambda(u)& :=\frac{1}{p}\|u\|_{W^{m,p}_0(D)}^p
-\lambda\int_D F(x, u(x))d\mu,
\end{aligned}
\end{equation}
for every $u\in W^{m,p}_0(D)$.\par
Similarly to \eqref{lamba1}, let us define the real constant
\begin{align}\label{lamba1pp}
\lambda_{m,p}:=\inf_{u\in C_0^m(D)\setminus\{0\}}\frac{\displaystyle \int_{D} |\nabla^m u|^p(x) d\mu}{\displaystyle \int_{D} |u(x)|^p d\mu}.
\end{align}
\indent With the above notations the next result holds.
\begin{theorem}\label{MolicaBisciPrincipalGeneralX}
Let $\mathscr G:=(V,E)$ be a weighted locally finite graph, $D$ be a bounded domain of $V$ such that $\mathop D\limits^ \circ\neq \emptyset$ and $\partial D\neq \emptyset$ and let $\mu:D\rightarrow ]0,+\infty[$ be a measure on $D$.
Let $\mu_0:=\min_{x\in D}\mu(x)>0$ and let
$f:D\times\RR\rightarrow\RR$ be a function such that \eqref{f0} holds
as well as
\begin{equation}\label{f25G}
\begin{aligned}
& \qquad\,\,\qquad \mbox{there are}\,\, \beta>p\,\, \mbox{and}\,\, r_0>0\,\, \mbox{such that}\\
& tf(x,t)\geq \beta F(x,t)>0\,\, \mbox{for any}\,\, |t|\geq r_0\,\,\mbox{and every}\,\, x\in D,
\end{aligned}
\end{equation}
where $F$ is the potential given by \eqref{F}.

Then  for any $\varrho>0$ and any
\begin{equation}\label{lambda}
0<\lambda<\frac{\varrho}{2\displaystyle\max_{{\footnotesize\begin{array}{c}
x\in D\\
|s|\leq \kappa_{m,p}{\varrho^{1/p}}
\end{array}}}\left|\int_0^{s}f(x,t)dt\right|}\,,
\end{equation}
where
\begin{equation}\label{kappaG}
\kappa_{m,p}:=\displaystyle\frac{1}{\mu_0\lambda_{m,p}^{1/p}},
\end{equation}
the problem~\eqref{Np0} admits at least two non-trivial solutions one of which lies in
$$
\mathbb{B}_\varrho^{(m,p)}:=\left\{u\in W^{m,p}_0(D):\displaystyle \|u\|_{W^{m,p}_0(D)} <{\varrho^{1/p}}\right\}.
$$
\end{theorem}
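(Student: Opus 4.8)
The plan is to transpose the proof of Theorem~\ref{MolicaBisciPrincipal} almost verbatim to the $(m,p)$-setting: the exponent $2$ is replaced by $p$, the inner product $\langle\cdot,\cdot\rangle$ by the action of the $(m,p)$-Laplacian $\mathcal L_{m,p}$, the first eigenvalue $\lambda_1$ by $\lambda_{m,p}$ of \eqref{lamba1pp}, and the linear term drops out (formally $\alpha\equiv0$). The only new ingredient is the higher-order compact embedding, Proposition~\ref{theorem 7} being stated only for $W^{1,2}_0(D)$, but it follows in the same way: since $D$ is finite, $(W^{m,p}_0(D),\|\cdot\|_{W^{m,p}_0(D)})$ is a finite dimensional Banach space by \cite[Theorem 27]{GLY2}, so bounded sequences are precompact, and from \eqref{lamba1pp} together with the elementary estimate
$$
\mu_0\max_{x\in D}|u(x)|^p\ \le\ \sum_{x\in D}\mu(x)|u(x)|^p\ =\ \|u\|_{L^p(D)}^p\ \le\ \lambda_{m,p}^{-1}\|u\|_{W^{m,p}_0(D)}^p
$$
one obtains $\|u\|_{L^\infty(D)}\le\kappa_{m,p}\|u\|_{W^{m,p}_0(D)}$ with $\kappa_{m,p}$ as in \eqref{kappaG}, hence the compact embedding $W^{m,p}_0(D)\hookrightarrow L^q(D)$ for every $q\in[1,+\infty]$. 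As in Section~\ref{SezA}, the weak solutions of \eqref{NpXXgeneral} are exactly the critical points of the functional $\mathcal J_\lambda$ of \eqref{Fu2General}, and a Kronecker-delta test-function argument as in Proposition~\ref{strong} (legitimate because $W^{m,p}_0(D)=C^m_0(D)$) shows that each such solution solves \eqref{NpXXgeneral} classically, the conditions $|\nabla^j u|=0$ on $\partial D$ ($j=0,\dots,m-1$) being built into $W^{m,p}_0(D)$.

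I would then apply Theorem~\ref{Pucci-Serrin+Ricceri} to $\tfrac1\lambda\mathcal J_\lambda=\tfrac1{p\lambda}\Phi-\Psi$, which has the same critical points as $\mathcal J_\lambda$, with
$$
\Phi(u):=\|u\|_{W^{m,p}_0(D)}^p,\qquad \Psi(u):=\int_DF(x,u(x))\,d\mu,
$$
and threshold parameter $\tfrac1{p\lambda}$. Since $p>1$, $\Phi$ is continuously G\^{a}teaux differentiable, sequentially weakly lower semicontinuous and coercive, whereas $\Psi$ is sequentially weakly continuous by exactly the chain of inequalities \eqref{psiweakly}, now using the $L^\infty$-convergence supplied by the embedding above and the continuity of $f(x,\cdot)$. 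The $(\rm PS)$-condition demanded by Theorem~\ref{Pucci-Serrin+Ricceri} — that each $J_\mu=\mu\Phi-\Psi$, $\mu>0$, satisfies $(\rm PS)$ — follows from the computation \eqref{eq3.11} of the proof of Theorem~\ref{MolicaBisciPrincipal} with $2$ replaced by $p$: the $p$-homogeneity of $\mathcal L_{m,p}$ gives $\langle\mathcal J_\lambda'(u),u\rangle=\|u\|_{W^{m,p}_0(D)}^p-\lambda\int_Df(x,u(x))u(x)\,d\mu$, whence for a sequence with $|\mathcal J_\lambda(u_k)|\le c$ and $\mathcal J_\lambda'(u_k)\to0$,
$$
c+\beta^{-1}\|u_k\|_{W^{m,p}_0(D)}\ \ge\ \Big(\tfrac1p-\tfrac1\beta\Big)\|u_k\|_{W^{m,p}_0(D)}^p+\lambda\beta^{-1}\!\int_D\big(f(x,u_k)u_k-\beta F(x,u_k)\big)\,d\mu ;
$$
splitting the integral over $\{|u_k|<r_0\}$ (uniformly bounded) and $\{|u_k|\ge r_0\}$ (non-negative, by \eqref{f25G}) and using $\beta>p>1$ forces $(u_k)_k$ to be bounded in $W^{m,p}_0(D)$, after which the compact-embedding argument of Proposition~\ref{PScondition} extracts a convergent subsequence.

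Finally I would check the geometry. As in the derivation of \eqref{inequal}, condition \eqref{f25G} yields $F(x,t)\ge b_1|t|^\beta-b_2$ for suitable $b_1,b_2>0$; hence, picking $u_0$ with $\int_D|u_0|^\beta\,d\mu>0$, $\mathcal J_\lambda(tu_0)\le\tfrac{t^p}{p}\|u_0\|_{W^{m,p}_0(D)}^p-b_1|t|^\beta\!\int_D|u_0|^\beta\,d\mu+b_2\mu(D)\to-\infty$ as $t\to+\infty$, since $\beta>p$, so $\tfrac1\lambda\mathcal J_\lambda$ is unbounded below and has no global minimum. On the other hand, for $\varrho>0$, evaluating at $u=0$ and using $|v(x)|\le\kappa_{m,p}\varrho^{1/p}$ for $\Phi(v)\le\varrho$ together with the continuity of $F$ and the finiteness of $D$,
$$
\chi(\varrho):=\inf_{\Phi(u)<\varrho}\frac{\displaystyle\sup_{\Phi(v)<\varrho}\Psi(v)-\Psi(u)}{\varrho-\Phi(u)}\ \le\ \frac1\varrho\,\max_{\substack{x\in D\\|s|\le\kappa_{m,p}\varrho^{1/p}}}\Big|\int_0^sf(x,t)\,dt\Big|\,,
$$
and the smallness condition \eqref{lambda} on $\lambda$ makes this bound lie below the Pucci--Serrin threshold $\tfrac1{p\lambda}$. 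Thus the hypotheses of Theorem~\ref{Pucci-Serrin+Ricceri} hold; since $\tfrac1\lambda\mathcal J_\lambda$ has no global minimum, the alternative there forces at least two critical points, one in $\Phi^{-1}(]-\infty,\varrho[)=\mathbb B_\varrho^{(m,p)}$, and by \eqref{f0} the zero function is not a solution, so both are non-trivial and, by the regularity step, classical solutions of \eqref{NpXXgeneral}. The point that requires care — everything else being essentially automatic because $W^{m,p}_0(D)$ is finite dimensional — is that the structural hypothesis is $\beta>p$ rather than $\beta>2$: this is exactly what keeps the Palais--Smale sequences bounded and the energy unbounded below, and what forces the scaling $\varrho^{1/p}$ (and the constant $\kappa_{m,p}$) to enter \eqref{lambda} and \eqref{kappaG} correctly.
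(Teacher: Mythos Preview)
Your approach is correct and is exactly what the paper intends: Theorem~\ref{MolicaBisciPrincipalGeneralX} is stated without proof, as a direct transposition of Theorem~\ref{MolicaBisciPrincipal} to the $(m,p)$-setting, and every step you outline (the higher-order embedding via finite dimensionality and \eqref{lamba1pp}, the weak continuity of $\Psi$, the Palais--Smale argument with $\beta>p$ replacing $\beta>2$, the unboundedness from below, and the $\chi(\varrho)$ estimate) is the natural analogue of the corresponding step in Section~\ref{EMresult}.

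One point deserves a flag. With $\Phi(u)=\|u\|_{W^{m,p}_0(D)}^p$ the parameter in Theorem~\ref{Pucci-Serrin+Ricceri} is $\mu=\tfrac{1}{p\lambda}$, so the required inequality is $\chi(\varrho)<\tfrac{1}{p\lambda}$. Your estimate gives $\chi(\varrho)\le\tfrac{1}{\varrho}\max|\!\int_0^s f\,dt|$, and condition~\eqref{lambda} as written (with the constant $2$) only yields $\tfrac{1}{\varrho}\max<\tfrac{1}{2\lambda}$; this implies $\chi(\varrho)<\tfrac{1}{p\lambda}$ when $p\le 2$ but not when $p>2$. The constant $2$ in \eqref{lambda} is evidently a leftover from the $p=2$ case and should read $p$ (consistent with the scaling $\varrho^{1/p}$ and $\kappa_{m,p}$ you correctly identify); with that correction your argument goes through for every $p>1$.
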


\indent Yamabe equations on infinite graphs have been studied in several papers also in connection with the uniqueness result for a Kazdan-Warner type
problem on bounded domains; see, among others, the papers \cite{GeJ2, GeJ3, GLY2} and very recently in \cite[Corollary 5.3]{piste}.\par
 We emphasize that the existence result given in Theorem \ref{MolicaBisciPrincipalGeneralX} is in the same spirit of \cite[Theorem 1.1]{piste}; see also Theorem \ref{MolicaBisciSpecial2C}.\par
\medskip
\noindent {\bf Acknowledgements.} The manuscript was realized under the auspices of the Italian MIUR project \textit{Variational methods, with applications to problems in mathematical physics and geometry} (2015KB9WPT 009) and the Slovenian Research Agency grants (P1-0292, N1-0114, N1-0083).

\end{document}